\newtheorem{theorem}{Theorem}[section]
\newtheorem{lemma}[theorem]{Lemma}
\newtheorem{example}[theorem]{Example}
\theoremstyle{definition}
\newtheorem{definition}[theorem]{Definition}
\renewcommand{\d}{\mathrm{d}}
\newcommand{\R}{\mathbb{R}}
\renewcommand{\P}{\mathbb{P}}
\newcommand{\Q}{\mathbb{Q}}
\newcommand{\K}{\mathbb{K}}
\newcommand{\E}{\mathbb{E}}
\renewcommand{\L}{\mathbb{L}}
\newcommand{\mc}[1]{\mathscr{#1}}
\newcommand{\bs}[1]{\boldsymbol{#1}}
\newcommand{\p}{\varphi}
\newcommand{\1}{\boldsymbol{1}}
\def\showauthornotes{1}
\newcommand{\Authornote}[2]{{\sf\small\color{blue}{[#1: #2]}}}
\newcommand{\Authornote}[2]{}
\title{Products, Abstractions and Inclusions of Causal Spaces}
\author[1]{Simon Buchholz$^\ast$}
\author[1]{Junhyung Park$^\ast$}
\author[1]{Bernhard Sch\"olkopf}
\affil[1]{%
	Empirical Inference Department\\
	Max Planck Institute for Intelligent Systems\\
	T\"ubingen, Germany
}
\begin{document}
	\maketitle
	\newcommand\nnfootnote[1]{%
		\begin{NoHyper}
			\renewcommand\thefootnote{}\footnote{#1}%
			\addtocounter{footnote}{-1}%
		\end{NoHyper}
	}
	\nnfootnote{*Equal Contribution.}
	
	\begin{abstract}
		Causal spaces have recently been introduced as a measure-theoretic framework to encode the notion of causality. While it has some advantages over established frameworks, such as structural causal models, the theory is so far only developed for single causal spaces. In many mathematical theories, not least the theory of probability spaces of which causal spaces are a direct extension, combinations of objects and maps between objects form a central part. In this paper, taking inspiration from such objects in probability theory, we propose the definitions of products of causal spaces, as well as (stochastic) transformations between causal spaces. In the context of causality, these quantities can be given direct semantic interpretations as causally independent components, abstractions and extensions. 
	\end{abstract}

	\section{Introduction}\label{sec:intro}
	Mathematical modelling of the world allows us to represent and analyse real-life situations in a quantitative manner. Depending on the aspects that one is interested in, different mathematical tools are chosen for the modelling; for example, to model how a system evolves over time, a system of differential equations can be used, and to model the randomness of events, one can use probability theory. Another aspect of the world which researchers are increasingly more interested in modelling is causality \citep{woodward2005making,russo2010causality,illari2011causality,pearl2018book}, and this is the focus of our work. 
	
	For the mathematical frameworks used in modelling, there are always ways to analyse multiple structures in a coherent manner. For example, in vector spaces, we have the notions of subspaces, product spaces and maps between vector spaces. In probability spaces, we have the notions of subspaces and restrictions, product spaces and measurable maps and transition probability kernels between spaces.
	
	In this work, we consider a modelling of the world through a recently proposed framework called \textit{causal spaces} \citep{park2023measure}. Causal spaces are a direct extension of probability spaces to encode causal information, and as such, are rigorously grounded in measure-theory. While they have some advantages over existing frameworks (e.g. structural causal models, or SCMs), such as the fact that they can easily encode cycles and continuous-time stochastic processes that are notoriously problematic in SCMs \citep{halpern2000axiomatizing,bongers2021foundations}, the theory of causal spaces is still in its infancy. In particular, \citet{park2023measure} only consider the development of \textit{single} causal spaces, and omit the discussion of construction of new causal spaces from existing ones or maps between causal spaces. The latter is of particular interest to researchers in causality for the purpose of \textit{abstraction} (see related works in Section \ref{SSrelatedworks}). When systems, humans or animals perceive the world, they consider different levels of detail depending on their ability to perceive and retain information and their level of interest. It is therefore crucial to connect the mathematical representations at varying levels of granularity in a coherent way. 
	
	In probability spaces, such notions are well-established. Product measures give rise to independent random variables, and measurable maps and probability kernels between probability spaces give rise to pushforward measures, which can be interpreted as abstractions or inclusions. Based on these concepts, and using the fact that causal spaces are a direct extension of probability spaces, we develop the notions of \textit{product causal spaces} and \textit{causal transformations}.

	\subsection{Related Works}\label{SSrelatedworks}
	The theory of causality has two dominant strands \citep{imbens2019potential}, one based on SCMs \citep{pearl2009causality,peters2017elements} and another based on potential outcomes \citep{hernan2020what,imbens2015causal}. Since concepts such as abstractions and connected components attract much more attention in the SCM community than in the potential outcomes community, we focus on comparisons with the SCM framework. 
	
	Seminal works on causal abstraction with SCMs are \citet{rubenstein2017causal} and \citet{beckers2019abstracting}, where the notions of exact transformations (to be further discussed in Section \ref{sec:comparison}), uniform transformations, abstractions, strong abstractions and constructive abstractions are proposed. \citet{beckers2020approximate} then relax these to an approximate notion. \citet{massidda2023causal} extended the notions to soft interventions, and \citet{zevcevic2023continual} to continually updated abstractions. 
	Causal feature learning is a closely related approach, that also aims to learn 
	higher level features \citep{chalupka2015visual,chalupka2016unsupervised, chalupka2017causal}
	There are also approaches based on category theory \citep{rischel2021compositional,otsuka2022equivalence,otsuka2023process} and probabilistic logic \citep{ibeling2023comparing}, all grounded in SCMs; see \citep{zennaro2022abstraction} for a review. 
	
	The notion of causal abstraction in the SCM framework has found applications in interpretations of neural networks \citep{geiger2021causal,geiger2023causal} as well as solving causal inference tasks (identification, estimation and sampling) at different levels of granularity with neural networks \citep{xia2024neural}. Moreover, \citet{zennaro2023jointly} proposed a way of \textit{learning} an abstraction from partial information about the abstraction, and demonstrates an application of causal abstraction in the SCM framework in the context of electric vehicle battery manufacturing and \citet{kekic2023targeted} learn an abstraction that explains a specific target. 
	
	\subsection{Paper Organisation}\label{subsec:paper_organisation}
	The rest of this paper is structured as follows. We first discuss the key notions from the theory of causal spaces in Section~\ref{sec:preliminaries}. Then we introduce 
	the extension of product causal spaces in Section~\ref{sec:product_spaces}
	followed by our definitions of transformations of causal spaces in Section~\ref{sec:transformations}.
	We put our definitions into context by comparing carefully to related works in Section~\ref{sec:comparison}.
	In Section~\ref{sec:results}, we then show various properties of our transformations, in particular for the subclass of abstractions. 
	
	\section{Preliminaries \& Notations}\label{sec:preliminaries}
	% We recall very briefly some essential notions from measure theoretic probability theory; for a comprehensive introduction, see, for example, \citep{cinlar2011probability,durrett2019probability}. We take a probability space as a starting point, namely, a triple \((\Omega,\mathscr{H},\mathbb{P})\) 

	We take a probability space as a starting point, namely, a triple \((\Omega,\mathscr{H},\mathbb{P})\); for a comprehensive introduction, see, for example, \citep{cinlar2011probability,durrett2019probability}.
	% satisfying the following axioms:
	% \begin{enumerate}
		%     \item \(\Omega\) is a set; 
		%     \item \(\mathscr{H}\) is a collection of subsets of \(\Omega\), called \textit{events}, such that 
		%     \begin{itemize}
			%         \item \(\Omega\in\mathscr{H}\);
			%         \item if \(A\in\mathscr{H}\), then \(\Omega\setminus A\in\mathscr{H}\);
			%         \item if \(A_1,A_2,...\in\mathscr{H}\), then \(\cup_nA_n\in\mathscr{H}\);
			%     \end{itemize}
		%     \item \(\mathbb{P}\) is a probability measure on \((\Omega,\mathscr{H}\), i.e. a function \(\mathbb{P}:\mathscr{H}\rightarrow[0,1]\) satisfying
		%     \begin{itemize}
			%         \item \(\mathbb{P}(\emptyset)=0\);
			%         \item \(\mathbb{P}(\cup_nA_n)=\sum_n\mathbb{P}(A_n)\) for any disjoint sequence \(A_n\) in \(\mathscr{H}\);
			%         \item \(\mathbb{P}(\Omega)=1\).
			%     \end{itemize}
		% \end{enumerate}
	Following \citep{park2023measure}, we additionally insist that \(\mathbb{P}\) is defined over the product measurable space \((\Omega,\mathscr{H})=\otimes_{t\in T}(E_t,\mathscr{E}_t)\) with \((E_t,\mathscr{E}_t)\) being the same standard measurable space if \(T\) is uncountable. Denote by \(\mathscr{P}(T)\) the power set of \(T\), and for \(S\in\mathscr{P}(T)\), we denote by \(\mathscr{H}_S\) the sub-\(\sigma\)-algebra of \(\mathscr{H}=\otimes_{t\in T}\mathscr{E}_t\) generated by measurable rectangles \(\times_{t\in T}A_t\), where \(A_t\in\mathscr{E}_t\) differs from \(E_t\) only for \(t\in S\) and finitely many \(t\). In particular, \(\mathscr{H}_\emptyset=\{\emptyset,\Omega\}\) is the trivial sub-\(\sigma\)-algebra of \(\Omega=\times_{t\in T}E_t\). Also, we denote by \(\Omega_S\) the subspace \(\times_{s\in S}E_s\) of \(\Omega=\times_{t\in T}E_t\), and for \(T\supseteq S\supseteq U\), we let \(\pi_{SU}\) denote the natural projection from \(\Omega_S\) onto \(\Omega_U\) and we use the shorthand \(\pi_{S}=\pi_{TS}\). We also write \(\omega_S=\pi_S \omega=(\omega_s)_{s\in S}\).
	We write \([d]=\{1,\ldots,d\}\) and \(f_\ast\P\) denotes the pushforward measure along the map \(f\), namely, \(f^*\mathbb{P}(A)=\mathbb{P}(f^{-1}(A))\).
	
	We first recall the definition of causal spaces, as given in \citet{park2023measure}. 
	\begin{definition}[Causal Spaces, {\citep[Definition 2.2]{park2023measure}}]\label{def:causal_space}
		A \textit{causal space} is defined as the quadruple \((\Omega,\mathscr{H},\mathbb{P},\mathbb{K})\), where \((\Omega,\mathscr{H},\mathbb{P})=(\times_{t\in T}E_t,\otimes_{t\in T}\mathscr{E}_t,\mathbb{P})\) is a probability space and \(\mathbb{K}=\{K_S:S\in\mathscr{P}(T)\}\), called the \textit{causal mechanism}, is a collection of transition probability kernels \(K_S\) from \((\Omega,\mathscr{H}_S)\) into \((\Omega,\mathscr{H})\), called the \textit{causal kernel on \(\mathscr{H}_S\)}, that satisfy the following axioms:
		\begin{enumerate}[(i)]
			\item\label{trivialintervention} for all \(A\in\mathscr{H}\) and \(\omega\in\Omega\), we have \(K_\emptyset(\omega,A)=\mathbb{P}(A)\);
			\item\label{interventionaldeterminism} for all \(\omega\in\Omega\), and events \(A\in\mathscr{H}_S\) and \(B\in\mathscr{H}\), we have \(K_S(\omega,A\cap B)=\1_A(\omega)K_S(\omega,B)=\delta_\omega(A)K_S(\omega,B)\).
		\end{enumerate}
	\end{definition}
	The causal kernels $K_S$ can be defined equivalently 
	as maps from $(\Omega_S,\mc{H}_S)$ to $(\Omega,\mc{H})$
	and it will be convenient to use this viewpoint occasionally in the following (this was also used implicitly in \cite{park2023measure}). This observation is explained in Appendix~\ref{app:factorization}.
	
	Next, we recall the definition of \textit{interventions}, which is \textit{the} central concept in any theory of causality.
	\begin{definition}[Interventions, {\citep[Definition 2.3]{park2023measure}}]\label{def:interventions}
		Let \((\Omega,\mathscr{H},\mathbb{P},\mathbb{K})=(\times_{t\in T}E_t,\otimes_{t\in T}\mathscr{E}_t,\mathbb{P},\mathbb{K})\) be a causal space, \(U\subseteq T\) a subset, \(\mathbb{Q}\) a probability measure on \((\Omega,\mathscr{H}_U)\) and \(\mathbb{L}=\{L_V:V\in\mathscr{P}(U)\}\) a causal mechanism on \((\Omega,\mathscr{H}_U,\mathbb{Q})\). An \textit{intervention on \(\mathscr{H}_U\) via \((\mathbb{Q},\mathbb{L})\)} is a new causal space \((\Omega,\mathscr{H},\mathbb{P}^{\text{do}(U,\mathbb{Q})},\mathbb{K}^{\text{do}(U,\mathbb{Q},\mathbb{L})})\), where the \textit{intervention measure} \(\mathbb{P}^{\text{do}(U,\mathbb{Q})}\) is a probability measure on \((\Omega,\mathscr{H})\) defined, for \(A\in\mathscr{H}\), by
		\[\mathbb{P}^{\text{do}(U,\mathbb{Q})}(A)=\int\mathbb{Q}(d\omega_U)K_U(\omega_U,A)\]
		and \(\mathbb{K}^{\text{do}(U,\mathbb{Q},\mathbb{L})}=\{K^{\text{do}(U,\mathbb{Q},\mathbb{L})}_S:S\subseteq T\}\) is the \textit{intervention causal mechanism} whose \textit{intervention causal kernels} are
		\begin{alignat*}{2}
			&K^{\text{do}(U,\mathbb{Q},\mathbb{L})}_S(\omega_S,A)\\
			&\qquad=\int L_{S\cap U}(\omega_{S\cap U},d\omega'_U)K_{S\cup U}((\omega_{S\setminus U},\omega'_U),A).
		\end{alignat*}
	\end{definition}
	We also recall the definition of \textit{causal effect}. 
	\begin{definition}[Causal Effects, {\citep[Definition B.1]{park2023measure}}]\label{def:causal_effects}
		Let \((\Omega,\mathscr{H},\mathbb{P},\mathbb{K})=(\times_{t\in T}E_t,\otimes_{t\in T}\mathscr{E}_t,\mathbb{P},\mathbb{K})\) be a causal space, \(U\subseteq T\) a subset, \(A\in\mathscr{H}\) an event and \(\mathscr{F}\) a sub-\(\sigma\)-algebra of \(\mathscr{H}\) (not necessarily of the form \(\mathscr{H}_S\) for some \(S\in\mathscr{P}(T)\)). 
		\begin{enumerate}[(i)]
			\item\label{nocausaleffect} If \(K_S(\omega,A)=K_{S\setminus U}(\omega,A)\) for all \(S\in\mathscr{P}(T)\) and all \(\omega\in\Omega\), then we say that \(\mathscr{H}_U\) has \textit{no causal effect on \(A\)}, or that \(\mathscr{H}_U\) is \textit{non-causal to \(A\)}.
			
			We say that \(\mathscr{H}_U\) has \textit{no causal effect on \(\mathscr{F}\)}, or that \(\mathscr{H}_U\) is \textit{non-causal to \(\mathscr{F}\)}, if, for all \(A\in\mathscr{F}\), the \(\sigma\)-algebra \(\mathscr{H}_U\) has no causal effect on \(A\).
			\item\label{activecausaleffect} If there exists \(\omega\in\Omega\) such that \(K_U(\omega,A)\neq\mathbb{P}(A)\), then we say that \(\mathscr{H}_U\) has an \textit{active causal effect on \(A\)}, or that \(\mathscr{H}_U\) is \textit{actively causal to \(A\)}. 
			
			We say that \(\mathscr{H}_U\) has an \textit{active causal effect on \(\mathscr{F}\)}, or that \(\mathscr{H}_U\) is \textit{actively causal to \(\mathscr{F}\)}, if \(\mathscr{H}_U\) has an active causal effect on some \(A\in\mathscr{F}\). 
			\item\label{dormantcausaleffect} Otherwise, we say that \(\mathscr{H}_U\) has a \textit{dormant causal effect on \(A\)}, or that \(\mathscr{H}_U\) is \textit{dormantly causal to \(A\)}. 
			
			We say that \(\mathscr{H}_U\) has a \textit{dormant causal effect on \(\mathscr{F}\)}, or that \(\mathscr{H}_U\) is \textit{dormantly causal to \(\mathscr{F}\)}, if \(\mathscr{H}_U\) does not have an active causal effect on any event in \(\mathscr{F}\) and there exists \(A\in\mathscr{F}\) on which \(\mathscr{H}_U\) has a dormant causal effect.
		\end{enumerate}
	\end{definition}
	Finally, we recall the definition of \textit{sources}, which allows us to connect the causal kernels to the probability measure \(\mathbb{P}\). For a sub-\(\sigma\)-algebra \(\mathscr{F}\) of \(\mathscr{H}\), we denote the \textit{conditional probability} of an event \(A\in\mathscr{H}\) given \(\mathscr{F}\) by \(\mathbb{P}_\mathscr{F}\). 
	\begin{definition}[Sources, {\citep[Definition D.1]{park2023measure}}]\label{def:sources}
		Let \((\Omega,\mathscr{H},\mathbb{P},\mathbb{K})=(\times_{t\in T}E_t,\otimes_{t\in T}\mathscr{E}_t,\mathbb{P},\mathbb{K})\) be a causal space, \(U\subseteq T\) a subset, \(A\in\mathscr{H}\) an event and \(\mathscr{F}\) a sub-\(\sigma\)-algebra of \(\mathscr{H}\). We say that \(\mathscr{H}_U\) is a \textit{(local) source} of \(A\) if \(K_U(\cdot,A)\) is a version of the conditional probability \(\mathbb{P}_{\mathscr{H}_U}(A)\). We say that \(\mathscr{H}_U\) is a \textit{(local) source} of \(\mathscr{F}\) if \(\mathscr{H}_U\) is a source of all \(A\in\mathscr{F}\). We say that \(\mathscr{H}_U\) is a \textit{global source} of the causal space if \(\mathscr{H}_U\) is a source of all \(A\in\mathscr{H}\).
	\end{definition}
	
	\section{Product Causal Spaces and Causal Independence}\label{sec:product_spaces}
	We first give the definition of the product of causal kernels, and the product of causal spaces. This constitutes the simplest way of constructing new causal spaces from existing ones. 
	\begin{definition}[Product Causal Spaces]\label{def:product}
		Suppose \(\mc{C}^1=(\Omega^1,\mc{H}^1,\P^1,\K^1)\) and \(\mc{C}^2=(\Omega^2,\mc{H}^2,\P^2,\K^2)\) with \(\Omega^1=\times_{t\in T^1}E_t\) and \(\Omega^2=\times_{t\in T^2}E_t\) are two causal spaces. For all \(S^1\subseteq T^1\) and \(S^2\subseteq T^2\), and for a pair of causal kernels \(K^1_{S^1}\in\mathbb{K}^1\) and \(K^2_{S^2}\in\mathbb{K}^2\), we define the \textit{product causal kernel} \(K^1_{S^1}\otimes K^2_{S^2}\), for \(\omega=(\omega_1,\omega_2)\in\Omega^1_{S^1}\times\Omega^2_{S^2}\) and events \(A_1\in\mathscr{H}^1\) and \(A_2\in\mathscr{H}^2\), by
		\[K^1_{S^1}\otimes K^2_{S^2}(\omega,A_1\times A_2)=K^1_{S^1}(\omega_1,A_1)K^2_{S^2}(\omega_2,A_2).\]
		This can then be extended to all of \(\mathscr{H}^1\otimes\mathscr{H}^2\) since the rectangles \(A_1\times A_2\) with \(A_1\in\mathscr{H}^1\) and \(A_2\in\mathscr{H}^2\) generate \(\mathscr{H}^1\otimes\mathscr{H}^2\). Then we define the \textit{product causal space}
		\[\mc{C}^1\otimes\mc{C}^2=(\Omega^1\times\Omega^2,\mc{H}^1\otimes\mc{H}^2,\P^1\otimes\P^2,\K^1\otimes\K^2)\]
		where the product causal mechanism \(\K^1\otimes\K^2\) is the unique family of kernels of the form \((K^1\otimes K^2)_{S^1\cup S^2}=K^1_{S^1}\otimes K^2_{S^2}\) for \(S^1\subseteq T^1\) and \(S^2\subseteq T^2\). 
	\end{definition}
	We first check that this procedure indeed produces a valid causal space. 
	\begin{restatable}[Products of Causal Spaces are Causal Spaces]{lemma}{product}\label{lem:product_space}
		The product causal space \(\mc{C}^1\otimes\mc{C}^2\) as defined in Definition \ref{def:product} is a causal space.
	\end{restatable}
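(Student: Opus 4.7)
The plan is to verify the two axioms of Definition~\ref{def:causal_space} for $\mc{C}^1 \otimes \mc{C}^2$, after first checking that the proposed kernels are well-defined transition probability kernels. Every $S \subseteq T^1 \sqcup T^2$ decomposes uniquely as $S = S^1 \sqcup S^2$ with $S^i = S \cap T^i$, so the prescription $(K^1 \otimes K^2)_S := K^1_{S^1} \otimes K^2_{S^2}$ is unambiguous. Under the natural identification $(\mc{H}^1 \otimes \mc{H}^2)_S = \mc{H}^1_{S^1} \otimes \mc{H}^2_{S^2}$, which follows from the definition of $\mc{H}_S$ in terms of measurable rectangles, these kernels act from the correct $\sigma$-algebra. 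For each fixed $\omega = (\omega_1,\omega_2)$, the prescription $K^1_{S^1}(\omega_1, A_1)\,K^2_{S^2}(\omega_2, A_2)$ on the $\pi$-system of rectangles extends uniquely to a probability measure on $\mc{H}^1 \otimes \mc{H}^2$ by the standard product measure construction (with total mass one inherited from the factors), and measurability in $\omega$ extends from rectangles to the full product $\sigma$-algebra by a monotone class argument.

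For axiom (i), taking $S = \emptyset$ yields $S^1 = S^2 = \emptyset$, and on a rectangle
\[(K^1 \otimes K^2)_\emptyset(\omega, A_1 \times A_2) = \P^1(A_1)\,\P^2(A_2) = (\P^1 \otimes \P^2)(A_1 \times A_2);\]
the equality extends to $\mc{H}^1 \otimes \mc{H}^2$ by uniqueness of the product measure. For axiom (ii), I first verify on rectangles $A = A_1 \times A_2$ with $A_i \in \mc{H}^i_{S^i}$ and $B = B_1 \times B_2$ with $B_i \in \mc{H}^i$ that
\begin{align*}
(K^1 \otimes K^2)_S(\omega, A \cap B) &= K^1_{S^1}(\omega_1, A_1 \cap B_1)\,K^2_{S^2}(\omega_2, A_2 \cap B_2) \\
&= \1_{A_1}(\omega_1)\,\1_{A_2}(\omega_2)\,K^1_{S^1}(\omega_1, B_1)\,K^2_{S^2}(\omega_2, B_2) \\
&= \1_A(\omega)\,(K^1 \otimes K^2)_S(\omega, B),
\end{align*}
where the middle equality invokes axiom (ii) for $\K^1$ and $\K^2$. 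Specialising to $B = \Omega^1 \times \Omega^2$ yields $(K^1 \otimes K^2)_S(\omega, A) = \1_A(\omega)$ for rectangles $A$; both sides are measures in $A$, so a monotone class argument extends the identity to all $A \in (\mc{H}^1 \otimes \mc{H}^2)_S$. The full axiom (ii) for arbitrary $A, B$ then follows by monotonicity of the product kernel: if $\omega \in A$, then $(K^1 \otimes K^2)_S(\omega, A^c \cap B) \le (K^1 \otimes K^2)_S(\omega, A^c) = 0$, and if $\omega \notin A$, then $(K^1 \otimes K^2)_S(\omega, A \cap B) \le (K^1 \otimes K^2)_S(\omega, A) = 0$.

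The main obstacle is purely bookkeeping: the identification $(\mc{H}^1 \otimes \mc{H}^2)_S = \mc{H}^1_{S^1} \otimes \mc{H}^2_{S^2}$, the well-definedness of the product kernel as a transition probability kernel, and the promotion of the key identities from the generating rectangles to the full product $\sigma$-algebra all rely on repeated but routine applications of Dynkin's $\pi$-$\lambda$ theorem. Once these extensions are in place, the algebraic content of both axioms is inherited essentially tautologically from the corresponding axioms for $\K^1$ and $\K^2$.
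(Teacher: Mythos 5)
Your proposal is correct and follows the same overall strategy as the paper: verify both axioms on measurable rectangles, where they reduce tautologically to the corresponding axioms for $\K^1$ and $\K^2$, and then extend by a $\pi$-$\lambda$ argument. The one place you genuinely diverge is the extension step for axiom (ii). The paper runs two successive measure-agreement arguments: first fixing a rectangle $A_1\times A_2$ and extending in $B$ over $\mc{H}^1\otimes\mc{H}^2$, then fixing $B$ and extending in $A$ over $\mc{H}^1_{S^1}\otimes\mc{H}^2_{S^2}$. You instead specialise to $B=\Omega^1\times\Omega^2$ to conclude $(K^1\otimes K^2)_S(\omega,\cdot)=\delta_\omega$ on $(\mc{H}^1\otimes\mc{H}^2)_S$ after a single $\pi$-$\lambda$ pass, and then recover the full axiom for arbitrary $A$ and $B$ by the monotonicity observation that a set of kernel measure zero absorbs all its subsets. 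This is a clean and valid shortcut --- it exploits the known equivalence between axiom (ii) and the statement that $K_S(\omega,\cdot)$ restricted to $\mc{H}_S$ is the Dirac measure at $\omega$ --- and it saves one class-theorem application at the cost of a short case analysis on whether $\omega\in A$. Both routes rest on the same identification $(\mc{H}^1\otimes\mc{H}^2)_S=\mc{H}^1_{S^1}\otimes\mc{H}^2_{S^2}$ and the same deferral of the kernel well-definedness to the standard product-kernel construction, so nothing is missing.
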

	The proof of this lemma can be found in Appendix~\ref{app:proofs1}.
	
	Note that it is only for the sake of simplicity of presentation that we presented the notion of products only for two probability spaces. Indeed, we can easily extend the definition to arbitrary products of causal kernels and causal spaces, just like it is possible for products of probability spaces. 
	
	When we take a product of causal spaces, the corresponding components in the resulting causal space do not have a causal effect on each other, as the following result shows. 
	\begin{restatable}[Causal Effects in Product Spaces]{lemma}{productcausaleffect}\label{lem:product_no_causal_effect}
		Suppose \(\mc{C}^1=(\Omega^1,\mc{H}^1,\P^1,\K^1)\) and \(\mc{C}^2=(\Omega^2,\mc{H}^2,\P^2,\K^2)\) with \(\Omega^1=\times_{t\in T^1}E_t\) and \(\Omega^2=\times_{t\in T^2}E_t\) are two causal spaces. Then in \(\mc{C}^1\otimes\mc{C}^2\), 
		\begin{enumerate}[(i)]
			\item \(\mathscr{H}_{T^1}\) has no causal effect on \(\mathscr{H}_{T^2}\), and \(\mathscr{H}_{T^2}\) has no causal effect on \(\mathscr{H}_{T^1}\);
			\item \(\mathscr{H}_{T^1}\) and \(\mathscr{H}_{T^2}\) are (local) sources of each other. 
		\end{enumerate}
	\end{restatable}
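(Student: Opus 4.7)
My plan is to reduce both statements to direct computations using the product causal kernel formula from Definition~\ref{def:product} together with the defining axioms of causal kernels. A useful preliminary observation is to identify the $\sigma$-algebras $\mathscr{H}_{T^1}$ and $\mathscr{H}_{T^2}$ in the product space concretely: the collection $\{A_1 \times \Omega^2 : A_1 \in \mathscr{H}^1\}$ is itself a $\sigma$-algebra containing every generating rectangle of $\mathscr{H}_{T^1}$ in the sense of Section~\ref{sec:preliminaries}, so it must equal $\mathscr{H}_{T^1}$; symmetrically $\mathscr{H}_{T^2} = \{\Omega^1 \times A_2 : A_2 \in \mathscr{H}^2\}$. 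This reduces every required verification to testing on such product sets, with no need for monotone-class extensions.

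For part~(i), I would fix $S \subseteq T = T^1 \sqcup T^2$, write $S^j = S \cap T^j$, and take an arbitrary $A = \Omega^1 \times A_2 \in \mathscr{H}_{T^2}$. Expanding $K_S = K^1_{S^1} \otimes K^2_{S^2}$ via Definition~\ref{def:product} gives
\[
K_S(\omega, A) \;=\; K^1_{S^1}(\omega_1, \Omega^1)\, K^2_{S^2}(\omega_2, A_2) \;=\; K^2_{S^2}(\omega_2, A_2),
\]
since every causal kernel is a probability measure on the full space. Performing the same expansion for $K_{S \setminus T^1} = K^1_\emptyset \otimes K^2_{S^2}$ yields the identical expression, establishing that $\mathscr{H}_{T^1}$ has no causal effect on $\mathscr{H}_{T^2}$. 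The converse follows by swapping the roles of the two factors.

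For part~(ii), to show that $\mathscr{H}_{T^1}$ is a source of $\mathscr{H}_{T^2}$, I would again test on $A = \Omega^1 \times A_2 \in \mathscr{H}_{T^2}$. The product kernel formula, together with axiom~(\ref{trivialintervention}) applied to $\mathscr{C}^2$, gives
\[
K_{T^1}(\omega, A) \;=\; K^1_{T^1}(\omega_1, \Omega^1)\, K^2_\emptyset(\omega_2, A_2) \;=\; \P^2(A_2).
\]
Since the right-hand side is constant it is automatically $\mathscr{H}_{T^1}$-measurable, and a one-line integration against $\P^1 \otimes \P^2$ over any $B = B_1 \times \Omega^2 \in \mathscr{H}_{T^1}$ confirms the conditional-probability identity $\P^1(B_1)\P^2(A_2) = (\P^1 \otimes \P^2)(B \cap A)$. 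Thus $K_{T^1}(\cdot, A)$ is a version of $(\P^1 \otimes \P^2)_{\mathscr{H}_{T^1}}(A)$, and the other direction of sourcehood is symmetric.

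I do not anticipate a real obstacle: the whole argument is the causal-kernel analogue of the elementary fact that the factors of a product measure are independent of one another. The only mild piece of care is the initial identification of $\mathscr{H}_{T^j}$ in the product space with the natural collection of product sets, after which each required equality collapses to a single line via the product formula plus $K^j_{S^j}(\cdot, \Omega^j) = 1$ or $K^j_\emptyset(\cdot, A_j) = \P^j(A_j)$.
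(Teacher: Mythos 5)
Your proposal is correct and follows essentially the same route as the paper's proof: identify events in \(\mathscr{H}_{T^1}\) and \(\mathscr{H}_{T^2}\) with product sets, expand the product kernel, and use \(K^1_{S^1}(\cdot,\Omega^1)=1\) for part (i) and \(K^2_\emptyset(\cdot,A_2)=\P^2(A_2)\) for part (ii). The only cosmetic difference is that in part (ii) you verify the conditional-probability identity directly by integrating over \(B_1\times\Omega^2\), whereas the paper invokes part (i) together with the probabilistic independence of the two factors; these are the same computation.
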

	
	The proof of this Lemma is in Appendix~\ref{app:proofs1}.
	
	Product causal spaces are analogous to \textit{connected components} in graphical models -- see, for example, \citep{sadeghi2023axiomatization}.
	
	\subsection{Causal Independence}\label{subsec:causal_independence}
	Recall that, in probability spaces, two events \(A\) and \(B\) are \textit{independent} with respect to the measure \(\mathbb{P}\) if \(\mathbb{P}(A\cap B)=\mathbb{P}(A)\mathbb{P}(B)\), i.e. the probability measure is the product measure.  Moreover, two \(\sigma\)-algebras are independent if each pair of events from the two \(\sigma\)-algebras are independent\footnote{Many authors take the view that the notion of independence is truly where probability theory starts, as a distinct theory from measure theory \citep[p.82, Section II.5]{cinlar2011probability}.}. Similarly, for a sub-\(\sigma\)-algebra \(\mathscr{F}\) of \(\mathscr{H}\), two events \(A\) and \(B\) are \textit{conditionally independent given \(\mathscr{F}\)} if \(\mathbb{P}_\mathscr{F}(A\cap B)=\mathbb{P}_\mathscr{F}(A)\mathbb{P}_\mathscr{F}(B)\) almost surely, and two \(\sigma\)-algebras are conditionally independent given \(\mathscr{F}\) if each pair of events from the two \(\sigma\)-algebras are conditionally independent given \(\mathscr{F}\). 
	
	Now we give a definition of causal independence that is analogous to conditional independence. 
	\begin{definition}[Causal Independence]\label{def:causal_independence}
		Consider a causal space \(\mc{C}=(\Omega=\times_{t\in T}E_t,\mc{H}=\otimes_{t\in T}\mathscr{E}_t,\mathbb{P},\mathbb{K})\). Then for \(U\subseteq T\), two events \(A,B\in\mathscr{H}\) are \textit{causally independent on \(\mathscr{H}_U\)} if, for all \(\omega\in\Omega\), 
		\[K_U(\omega,A\cap B)=K_U(\omega,A)K_U(\omega,B).\]
		We say that two sub-\(\sigma\)-algebras \(\mathscr{F}_1\) and \(\mathscr{F}_2\) are \textit{causally independent on \(\mathscr{H}_U\)} if each pair of events from \(\mathscr{F}_1\) and \(\mathscr{F}_2\) are causally independent on \(\mathscr{H}_U\). 
		% Then for \(T_1,T_2\subseteq T\) with \(T_1\cap T_2=\emptyset\), the corresponding \(\sigma\)-algebras \(\mathscr{H}_{T_1}\) and \(\mathscr{H}_{T_2}\) are \textit{causally independent} if, for all subsets \(S_1\subseteq T_1\) and \(S_2\subseteq T_2\), and all \(A\in\mathscr{H}_{T_1}\otimes\mathscr{H}_{T_2}\) and \(\omega\in\Omega\), we have
		% \[K_{S_1\cup S_2}(\omega,A)=K_{S_1}\otimes K_{S_2}(\omega,A).\]
	\end{definition}
	Semantically, causal independence should be interpreted as follows: if \(A\) and \(B\) are causally independent on \(\mathscr{H}_U\), then they are independent once an intervention has been carried out on \(\mathscr{H}_U\). Note also that causal independence is really about the causal kernels, and has nothing to do with the probability measure \(\mathbb{P}\) of the causal space. Indeed, it is possible for \(A\) and \(B\) to be causally independent but not probabilistically independent, or causally independent but not conditionally independent, or vice versa. Let us illustrate with the following simple examples. 
	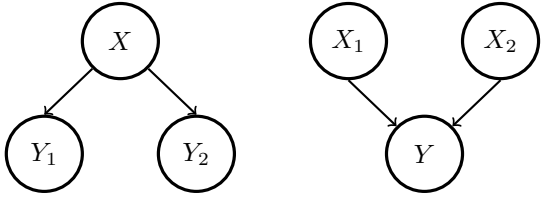
\begin{figure}[t]
		\begin{center}
			\begin{tikzpicture}[roundnode/.style={circle, draw=black, very thick, minimum size=10mm}]
				\node[roundnode, align=center] (y1) at (-1, 0) {\(Y_1\)};
				\node[roundnode, align=center] (y2) at (1, 0) {\(Y_2\)};
				\node[roundnode, align=center] (x) at (0, 1.5) {\(X\)};
				\draw[thick, ->](x.south west) to (y1.north);
				\draw[thick, ->](x.south east) to (y2.north);
				\node[roundnode, align=center] (y) at (4, 0) {\(Y\)};
				\node[roundnode, align=center] (x1) at (3, 1.5) {\(X_1\)};
				\node[roundnode, align=center] (x2) at (5, 1.5) {\(X_2\)};
				\draw[thick, ->](x1.south) to (y.north west);
				\draw[thick, ->](x2.south) to (y.north east);
			\end{tikzpicture}
		\end{center}
		\caption{Graphs of SCMs in Example \ref{ex:causal_independence}.}
		\label{fig:independence}
	\end{figure}
	\begin{example}[Causal Independence]\label{ex:causal_independence}
		\begin{enumerate}[(i)]
			\item Consider three variables \(X\), \(Y_1\) and \(Y_2\) related through the equations
			\[X=N,\quad Y_1=X+U_1,\quad Y_2=X+U_2,\]
			where \(N\), \(U_1\) and \(U_2\) are standard normal variables (see Figure~\ref{fig:independence} left). We denote by \(\P\) their joint distribution on \(\R^3\), and we identify this SCM with the causal space \((\mathbb{R}^3,\mathscr{B}(\mathbb{R}^3),\mathbb{P},\mathbb{K})\)\footnote{Here, \(\mc{B}\) represents the Borel \(\sigma\)-algebra.}, where \(\K\) is obtained via the above structural equations. Then it is clear to see that \(Y_1\) and \(Y_2\) are causally independent on \(\mathscr{H}_X\), since, for every \(x\), and \(A,B\in\mc{B}(\mathbb{R})\), \(K_X(x,\{Y_1\in A,Y_2\in B\})\) is bivariate-normally distributed with mean \((x,x)\) and identity covariance matrix, and so
			\begin{alignat*}{2}
				K_X&(x,\{Y_1\in A,Y_2\in B\})\\
				&=K_X(x,\{Y_1\in A\})K_X(x,\{Y_2\in B\}).
			\end{alignat*}
			By the same reasoning, \(Y_1\) and \(Y_2\) are conditionally independent given \(\mathscr{H}_X\). However, it is clear that they are unconditionally dependent, because they both depend on the value of \(X\). 
			\item Now consider three variables \(X_1\), \(X_2\) and \(Y\) related through the equations
			\[X_1=N_1,\quad X_2=N_2,\quad Y=X_1+X_2+U\]
			where \(N_1\), \(N_2\) and \(U\) are standard normal variables (see Figure \ref{fig:independence} right). We denote by \(\P\) their joint distribution on \(\R^3\), and we identify this SCM with the causal space \((\mathbb{R}^3,\mathscr{B}(\mathbb{R}^3),\mathbb{P},\mathbb{K})\), where \(\K\) is obtained via the above structural equations. Then it is clear that \(X_1\) and \(X_2\) are probabilistically independent. They are also causally independent on \(\mathscr{H}_Y\), since, for any \(A,B\in\mc{B}(\R)\),
			\begin{alignat*}{2}
				K_Y(y,\{X_1\in A,X_2\in B&\})=\P(X_1\in A,X_2\in B)\\
				&=\P(X_1\in A)\P(X_2\in B).
			\end{alignat*}
			However, it is clear that they are conditionally dependent given \(\mathscr{H}_Y\). 
		\end{enumerate}
	\end{example}
	Again, causal independence is only defined for two \(\sigma\)-algebras for the sake of notational convenience; it can easily be extended to arbitrary collections of \(\sigma\)-algebra. 
	
	\section{Transformations of Causal Spaces}\label{sec:transformations}
	Consider causal spaces \(\mc{C}^1=(\Omega^1,\mc{H}^1,\mathbb{P}^1,\mathbb{K}^1)\) and \(\mc{C}^2=(\Omega^2,\mc{H}^2,\mathbb{P}^2,\mathbb{K}^2)\) with \(\Omega^1=\times_{t\in T^1}E_t\) and \(\Omega^2=\times_{t\in T^2}E_t\). We want to define transformations between causal spaces \(\mc{C}^1\) and \(\mc{C}^2\). These transformations shall, on the one hand, preserve aspects of the causal structure, i.e., the spaces \(\mc{C}^1\) and \(\mc{C}^2\) shall still describe essentially the same system. On the other hand, they shall be flexible so that different types of mappings between causal spaces can be captured. 
	
	We focus on transformations that preserve individual variables or combine them in a meaningful way. This relation will be encoded by a map \(\rho:T^1\to T^2\), which can be interpreted as encoding the fact that \(S\subseteq T^2\) depends only on the variables indexed by \(\rho^{-1}(S)\). Deterministic maps are not sufficiently expressive for our purposes and we therefore focus on stochastic maps, i.e., on 
	probability kernels from measurable spaces \((\Omega^1,\mathscr{H}^1)\) to 
	\((\Omega^2,\mathscr{H}^2)\). 
	\begin{definition}[Admissible Maps]\label{def:admissible}
		Suppose that \(\kappa:\Omega^1\times\mc{H}^2\to [0,1]\) is a probability kernel and \(\rho:T^1\to T^2\) is a map. Then we call the pair \((\kappa,\rho)\) \textit{admissible} if \(\kappa(\cdot,A)\) is \(\mc{H}^1_{\rho^{-1}(S)}\) measurable for all \(S\subset \rho(T^1)\)
		and \(A\in \mc{H}^2_{S}\).
	\end{definition}
	One difference between probability theory and causality seems to be that the latter requires the notion of variables (equivalently a product structure of the underlying space) that define entities that can be intervened upon. For a meaningful relation between two causal spaces, their interventions should be related, which requires some preservation of variables. The definition of admissible maps captures the fact that variables from \(\rho^{-1}(S)\) are combined to form a new summary collection of variables indexed by \(S\).
	% Note that the definition is given for individual \(t\in\rho(T^1)\), but it can be shown to be true for any \(S\subseteq\rho(T^1)\subseteq T^2\).
	% \begin{lemma}
		%     For any \(S\subseteq\rho(T^1)\subseteq T^2\) and any \(A\in\mathscr{H}^2_S\), the function \(\kappa(\cdot,A)\) is measurable with respect to \(\mathscr{H}^1_{\rho^{-1}(S)}\). 
		% \end{lemma}
	% \begin{proof}
		%     First, see that, for any \(t\in S\) and any \(A\in\mathscr{H}^2_{\{t\}}\), \(\kappa(\cdot,A)\) is measurable with respect to \(\mathscr{H}^1_{\rho^{-1}(t)}\) by definition. Then note that for any finite subset \(S'\subseteq S\), 
		% \end{proof}
	% Note that this condition  implies that for $S\subseteq \rho(T^1)\subseteq T^2$ and $A\in \mc{H}^2_{S}$
	% the function $\kappa(\cdot, A)$ is measurable with respect to $\mc{H}^1_{\rho^{-1}(S)}$. Indeed, by definition this is true
	% for any $A\in \mc{H}^2_{\{t\}}$ for any $t\in S$ because $\mc{H}^1_{\rho^{-1}(t)}\subset \mc{H}^1_{\rho^{-1}(S)}$.
	% So this is true for all $A\in \mc{H}^2_{S}$. \snote{Any more details, is this true? What about infinite products?}\jnote{Why don't we take this as the definition, instead of individual \(t\)'s? Not sure if I can show it. }
	
	We now require maps between causal spaces to respect the distributional and interventional structure in the following sense.
	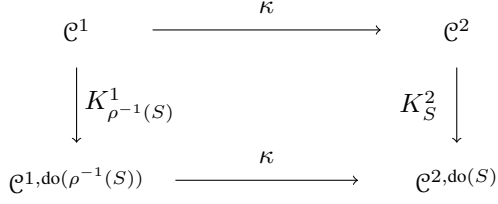
\begin{figure}[t]
		\centering
		\begin{tikzpicture}
			\node at (0, 0) {\(\mathscr{C}^1\)};
			\node at (5,0) {\(\mathscr{C}^2\)};
			\node at (0, -2) {\(\mathscr{C}^{1,\text{do}(\rho^{-1}(S))}\)};
			\node at (5, -2) {\(\mathscr{C}^{2,\text{do}(S)}\)};
			\draw[->] (1,0) -- (4,0);
			\node at (2.5, 0.3) {\(\kappa\)};
			\draw[->] (0,-0.5) -- (0, -1.5);
			\draw[->] (1.3, -2) -- (3.7, -2);
			\draw[->] (5, -0.5) -- (5, -1.5);
			\node at (2.5, -1.7) {\(\kappa\)};
			\node at (0.7,-1) {\(K^1_{\rho^{-1}(S)}\)};
			\node at (4.5, -1) {\(K^2_S\)};
		\end{tikzpicture}
		\caption{Interventional Consistency Definition \ref{def:causal_transformation} Equation (\ref{eq:interventional_consistency}) -- intervention and transformation commute.}
		\label{fig:interventional_consistency}
	\end{figure}
	\begin{definition}[Causal Transformations]\label{def:causal_transformation}
		A \textit{transformation} of causal spaces, or a \textit{causal transformation}, \(\p:\mc{C}^1\to\mc{C}^2\) is an admissible pair \(\p=(\kappa,\rho)\) satisfying the following two properties. 
		\begin{enumerate}[(i)]
			\item The map satisfies  \textit{distributional consistency}, i.e., for \(A\in\mc{H}^2\)
			\begin{equation}\label{eq:distributional_consistency}
				\int\P^1(\d\omega)\,\kappa(\omega,A)=\P^2(A).
			\end{equation}
			\item The map satisfies \textit{interventional consistency}, i.e., for all \(A\in\mc{H}^2_{\rho(T^1)}\), \(S\subset\rho(T^1)\), and \(\omega\in\Omega^1\) the following holds 
			\begin{equation}\label{eq:interventional_consistency}
				\begin{split}
					\int K^1_{\rho^{-1}(S)}(\omega,\d\omega')&\kappa(\omega',A)\\
					&=\int\kappa(\omega,\d\omega')K^2_{S}(\omega',A).
				\end{split}
			\end{equation}
		\end{enumerate}
	\end{definition}
	Interventional consistency requires that interventions and causal transformations commute, i.e., the result of first intervening and then applying the transformation is the same as intervening on the target after the transformation -- see Figure~\ref{fig:interventional_consistency}. We emphasise that in Definition~\ref{def:admissible} and \ref{def:causal_transformation} we do not prescribe conditions for added components indexed by \(T^2\setminus\rho(T^1)\). Further, we remark that as a special case, we can accommodate deterministic maps \(f:\Omega_1\to\Omega_2\) by considering the associated probability kernel \(\kappa_f(\omega,A)=\bs{1}_A(f(\omega))\). In this case, the admissibility condition reduces to the statement that \(\pi_{S}\circ f\) is measurable with respect to \(\mc{H}^1_{\rho^{-1}(S)}\) for all \(S\subset\rho(T^1)\) and distributional consistency 
	becomes, for \(A\in\mc{H}^2\),
	\begin{alignat*}{2}
		\P^2(A)&=\int\P^1(\d\omega)\kappa(\omega,A)\\
		&=\int\P^1(\d\omega)\bs{1}_A(f(\omega))\\
		&=\P^1(f^{-1}(A))
	\end{alignat*}
	so \(f_\ast\P^1=\P^2\) is the pushforward measure of \(\P^1\) along \(f\). Interventional consistency then reads
	\begin{equation}\label{eq:int_consistent_f}
		K^1_{\rho^{-1}(S)}(\omega,f^{-1}(A))=K^2_{S}(f(\omega),A)
	\end{equation}
	for all \(A\in\mc{H}^2_{\rho(T^1)}\), \(S\subset\rho(T^1)\), and \(\omega\in\Omega^1\). Alternatively this can be expressed as
	\[f_\ast K^1_{\rho^{-1}(S)}(\omega,A)=K^2_{S}(f(\omega),A).\]
	where the push-forward acts on the measure defined by the probability kernel for some fixed \(\omega\). Henceforth, with a slight abuse of notation, we denote deterministic maps by \((f,\rho)\) without resorting to the associated probability kernel.
	
	\subsection{Examples}\label{subsec:examples}
	Let us provide four prototypical examples of maps between causal spaces that are covered by this definition.
	Here we resort to the language of SCMs because they are a convenient framework that fits into causal spaces.
	\begin{figure}[t]
		\begin{center}
			\begin{tikzpicture}[roundnode/.style={circle, draw=black, very thick, minimum size=10mm}]
				\node[roundnode, align=center] (y1) at (-1, 0) {\(Y_1\)};
				\node[roundnode, align=center] (y2) at (1, 0) {\(Y_2\)};
				\node[roundnode, align=center] (x1) at (-1, 1.5) {\(X_1\)};
				\node[roundnode, align=center] (x2) at (1, 1.5) {\(X_2\)};
				\draw[thick, ->](x1.south) to (y1.north);
				\draw[thick, ->](x1.south east) to (y2.north west);
				\draw[thick, ->](x2.south) to (y2.north);
				\draw[thick, ->](x2.south west) to (y1.north east);
				\node[roundnode, align=center] (x) at (5, 1.5) {\(X\)};
				\node[roundnode, align=center] (y) at (5, 0) {\(Y\)};
				\draw[thick, ->](x.south) to (y.north);
				\draw[thick, ->] (2,0.75) -- (4, 0.75);
				\node at (3, 1.2) {\(X=X_1+X_2\)};
				\node at (3, 0.3) {\(Y=Y_1+2Y_2\)};
			\end{tikzpicture}
		\end{center}
		\caption{Abstraction of SCMs in Example~\ref{ex:abs}.}
		\label{fig:abstraction}
	\end{figure}
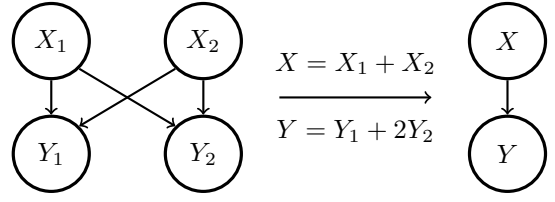
	\begin{example}[Abstraction]\label{ex:abs}
		We consider four variables \(X_1\), \(X_2\), \(Y_1\), and \(Y_2\) which are related through the equations
		\begin{alignat*}{3}
			X_1&=N_1,&X_2&=N_2,\\
			Y_1&=3X_1+X_2+U_1,\qquad&Y_2&=X_2+U_2
		\end{alignat*}
		where \(U_1\), \(U_2\), \(N_1\), \(N_2\) are independent standard normal variables. We denote by \(\P\) their joint distribution on \(\R^4\). Consider 
		\[X=N,\qquad Y=3X+U\]
		where \(N\sim N(0,2)\) and \(U\sim N(0,5)\). Denote their joint distribution on \(\R^2\) by \(\Q\). We identify the two SCMs with causal spaces \((\R^4, \mc{B}(\R^4),\P,\K)\) and \((\R^2,\mc{B}(\R^2),\Q,\L)\) as explained in \citep[Section 3.1]{park2023measure}.
		
		Consider the deterministic map \(f:\R^4\to\R^2\) given by \(f(x_1,x_2,y_1,y_2)=(x_1+x_2,y_1+2y_2)\) and the map \(\rho:[4]\to[2]\) given by \(\rho(1)=\rho(2)=1\), \(\rho(3)=\rho(4)=2\). Clearly, the pair \((f,\rho)\) is admissible as defined in Definition~\ref{def:admissible}. It can be checked that 
		% \begin{align}
			% \begin{split}
				%     \E((X_1+X_2)^2)=\E(N_1^2+N_2^2)=2=\E(X^2)\\
				%     \E((Y_1+Y_2)^2)= \E((2N_1)^2+N_2^2+U_1^2+U_2^2)
				%     =7 = \E( (\frac{3}{2}N)^2 + U^2 )=\E(Y^2)
				%     \\
				%     \E (X_1+X_2)(Y_1+Y_2) = 
				%     \E 2N_1^2+N_2^2=3 = \E(\frac{3}{2}N^2)=\E(XY)
				% \end{split}
			% \end{align}
		\begin{alignat*}{2}
			\E[(X_1+X_2)^2]&=2=\E[X^2]\\
			\E[(Y_1+2Y_2)^2]&=23=\E[Y^2]\\
			\E[(X_1+X_2)(Y_1+2Y_2)]&=6=\E[XY]
		\end{alignat*}
		which implies that \(f_\ast\P=\Q\) because both distributions are centered Gaussian and their covariance matrices agree.
		
		The non-trivial causal consistency relation \eqref{eq:interventional_consistency} concerns interventions on \(\{X_1,X_2\}\) and \(X\) and on \(\{Y_1,Y_2\}\) and \(Y\). Note that 
		\begin{alignat*}{2}
			K_{\{X_1,X_2\}}&((x_1,x_2,y_1,y_2),\cdot)\\
			&= \delta_{(x_1,x_2)}\otimes N\left(\begin{pmatrix}3x_1+x_2\\x_2\end{pmatrix},\mathrm{Id}_2\right).
		\end{alignat*}
		Then we obtain
		\begin{alignat*}{2}
			f_\ast  K_{\{X_1,X_2\}}&((x_1,x_2,y_1,y_2),\cdot)\\
			&=\delta_{x_1+x_2}\otimes N(3x_1+3x_2,5).
		\end{alignat*}
		On the other hand, we find
		\begin{alignat*}{2}
			L_{X}((x,y),\cdot )&=\delta_x\otimes N\left(3 x, 5\right)\\
			\Rightarrow  L_{X}(f(x_1,x_2,y_1,y_2),\cdot)&=\delta_{x_1+x_2}\otimes N(3x_1+3x_2,5)
		\end{alignat*}
		so that we see that \eqref{eq:int_consistent_f} holds in this case. Similarly, we obtain
		\begin{alignat*}{2}
			K_{\{Y_1,Y_2\}}((x_1,x_2,y_1,y_2),\cdot)&=N(0,\mathrm{Id}_1)\otimes\delta_{(y_1,y_2)},\\
			L_{Y}((x,y),\cdot)&=N(0,2)\otimes\delta_y.
		\end{alignat*}
		We again find 
		\begin{alignat*}{2}
			f_\ast K_{\{Y_1,Y_2\}}&((x_1,x_2,y_1,y_2),\cdot)
			\\
			&=L_{Y}((x_1+x_2,y_1+2y_2),\cdot).
		\end{alignat*}
	\end{example}
	This example shows abstraction, i.e., we obtain a transformation to a more coarse-grained view of the system. 
	Note that interventional consistency is quite restrictive to satisfy, e.g., here it is crucial that
	all distributions are Gaussian so that all conditional distributions are also Gaussian. 
	
	Next, we consider an example that allows us to embed a causal space in a larger space that adds an independent disjoint system. For this, we make use of the definition of product causal spaces (Definition \ref{def:product}). In this case, the transformation is stochastic. 
	\begin{figure}[t]
		\begin{center}
			\begin{tikzpicture}[roundnode/.style={circle, draw=black, very thick, minimum size=10mm}]
				\node at (0, 0) {\(\mc{C}^1\)};
				\node at (4.5, 0) {\(\mc{C}^1\otimes\mc{C}^2\)};
				\draw[thick, ->] (1,0) -- (3, 0);
				\node at (2, 0.4) {\(\rho(t)=t\)};
				\node at (2, -0.4) {\(\kappa(\omega,\cdot)=\delta_\omega\otimes\mathbb{P}^2\)};
			\end{tikzpicture}
		\end{center}
		\caption{Inclusions of component causal spaces into the product (Example~\ref{ex:incl}).}
		\label{fig:inclusion}
	\end{figure}
	
	\begin{example}[Inclusion]\label{ex:incl}
		Let \(\mc{C}^1=(\Omega^1,\mc{H}^1,\P^1,\K^1)\) and \(\mc{C}^2=(\Omega^2,\mc{H}^2,\P^2,\K^2)\) be two causal spaces, with \(\Omega^1=\times_{t\in T^1}E_t\) and \(\Omega^2=\times_{t\in T^2}E_t\). We define an inclusion map \((\kappa,\rho):\mc{C}^1\to\mc{C}^1\otimes\mc{C}^2\) by considering \(\rho(t)=t\) for \(t\in T^1\) and \(\kappa(\omega,\cdot)=\delta_{\omega}\otimes\P^2\) (see Figure~\ref{fig:inclusion}). This pair is clearly admissible and satisfies distributional consistency:
		\begin{alignat*}{2}
			\int\P^1(\d\omega)\kappa(\omega,A_1\times A_2)&=\int\P^1(\d\omega)\bs{1}_{A_1}(\omega)\P^2(A_2)\\
			&=\P^1(A_1)\P^2(A_2).
		\end{alignat*}
		Moreover, for any \(S\subset T^1\), \(\omega\in\Omega^1\), \(A_1\in\mathscr{H}^1\) and \(A_2\in\mathscr{H}^2\), we have
		\[\int K_S^1(\omega,\d\omega')\kappa(\omega',A_1\times A_2)=\P^2(A_2)K_S^1(\omega,A_1)\]
		and also, 
		\begin{alignat*}{2}
			\int&\kappa(\omega,d\omega'_1d\omega'_2)K^1_S\otimes K^2_\emptyset((\omega'_1,\omega'_2),A_1\times A_2)\\
			&=\int\kappa(\omega,d\omega'_1d\omega'_2)K^1_S(\omega'_1,A_1)K^2_\emptyset(\omega'_2,A_2)\\
			&=K^1_S(\omega,A_1)\int\P^2(d\omega'_2)\P^2(A_2)\\
			&=\mathbb{P}^2(A_2)K^1_S(\omega,A_1).
		\end{alignat*}
		where we used the condition on \(K_\emptyset\) in Definition~\ref{def:causal_space}. By the usual monotone convergence theorem arguments, we have that, for any \(A\in\mathscr{H}^1\otimes\mathscr{H}^2\), 
		\begin{alignat*}{2}
			\int&K^1_S(\omega,d\omega')\kappa(\omega',A)\\
			&=\int\kappa(\omega,d\omega'_1d\omega'_2)K^1_S\otimes K^2_\emptyset((\omega'_1,\omega'_2),A_1\times A_2).
		\end{alignat*}
		Thus, interventional consistency holds, in this case even for all sets \(A\), not just for those measurable with respect to \(\mc{H}^2_{\rho(T^1)}\).
	\end{example}
	This shows that we can consider causal maps including our system into a larger system containing additional independent components.
	
	Finally, we consider a more involved embedding example.
	
	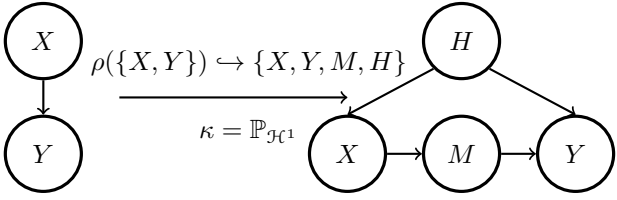
\begin{figure}[t]
		\begin{center}
			\begin{tikzpicture}[roundnode/.style={circle, draw=black, very thick, minimum size=10mm}]
				\node[roundnode, align=center] (y1) at (-1, 0) {\(Y\)};
				\node[roundnode, align=center] (x1) at (-1, 1.5) {\(X\)};
				\draw[thick, ->](x1.south) to (y1.north);
				\draw[thick, ->] (0,0.75) -- (3, 0.75);
				\node[roundnode, align=center] (x2) at (3, 0) {\(X\)};
				\node[roundnode, align=center] (m) at (4.5, 0) {\(M\)};
				\node[roundnode, align=center] (y2) at (6, 0) {\(Y\)};
				\node[roundnode, align=center] (h) at (4.5, 1.5) {\(H\)};
				\draw[thick, ->](x2.east) to (m.west);
				\draw[thick, ->](m.east) to (y2.west);
				\draw[thick, ->](h.south east) to (y2.north);
				\draw[thick, ->](h.south west) to (x2.north);
				\node at (1.7, 1.2) {\(\rho(\{X,Y\})\hookrightarrow\{X,Y,M,H\}\)};
				\node at (1.7, 0.3) {\(\kappa=\P_{\mc{H}^1}\)};
			\end{tikzpicture}
		\end{center}
		\caption{Inclusions of SCMs (Example~\ref{ex:sub}).}
		\label{fig:sub}
	\end{figure}
	
	\begin{example}[Inclusion of SCMs]\label{ex:sub}
		Consider the following SCM 
		\begin{alignat*}{3}
			H&=N_H,&X&=H+N_X,\\
			M&=X+N_M,\qquad&Y&=M+H+N_Y.
		\end{alignat*}
		We denote the joint distribution of \((X,Y,M,H)\) by \(\mathbb{P}\), and the marginal distribution on \((X,Y)\) by \(\mathbb{P}^{XY}\). 
		
		We consider a causal space \(\mc{C}^1=(\Omega^1,\mc{H}^1,\P^{XY},\K)\) that represents the pair \((X,Y)\), where \(\Omega^1=\mathbb{R}^2\) and \(\mc{H}^1=\mathscr{B}(\R^2)\), and a causal space \(\mc{C}^2=(\Omega^2,\mc{H}^2,\P,\L)\) representing the full SCM, where \(\Omega^2=\mathbb{R}^4\) and \(\mathscr{H}^2=\mathscr{B}(\mathbb{R}^4)\), i.e., it contains in addition a mediator and a confounder. The causal mechanisms \(\K\) and \(\L\) are derived from the SCM. Then we consider the obvious \(\rho\) that embeds \(\{X,Y\}\) into \(\{X,Y,M,H\}\) and, for \(A\in\mathscr{H}^2\), 
		\[\kappa(\cdot,A)=\P_{\mathscr{H}^1}(A).\]
		Clearly, this pair is admissible because on the variables \(X\) and \(Y\)  we use the identity transformation. Distributional consistency follows by
		\begin{alignat*}{2}
			\int\kappa((x,y),A)\P^{XY}(d(x,y))&=\int\P_{\mathscr{H}^1}(A)d\P^{XY}\\
			&=\P(A).
		\end{alignat*}
		Interventional consistency also holds so that \((\kappa,\rho)\) is indeed  a causal transformation. For a proof of this fact we refer to the  more general result in Lemma~\ref{lem:scm}.

	\end{example}
	This example therefore shows that we can embed a system in a larger system that captures a more accurate description.
	% Let us give one last example of a causal transformation that we will use
	% to show that for general stochastic maps $\kappa$ there are not necessarily close connections between domain and target, e.g., interventional consistency
	% does not restrict the causal kernels on the target much.
	% \begin{example}\label{ex:trivial}
		%     Consider two causal spaces $\mc{C}^1$ and $\mc{C}^2$.  Then the pair $(\kappa, \rho)$
		%     where $\rho$ is arbitrary and $\kappa(\omega, \cdot)=\P^2(\cdot)$ for all $\omega\in \Omega^1$
		%     defines a causal map if for all $S^2\subset T^2$
		%     \begin{align}
			%         \int \P^2(\d \omega) K_{S^2}(\omega, A)=\P^2(A).
			%     \end{align}
		% \end{example}
	% We find that $\kappa$ does not depend on $\omega$ so that $(\kappa,\rho)$ is admissible.
	% Clearly, for every probability measure $\P^1$
	% \begin{align}
		%     \P^2(A)=\int \P^1(\d\omega) \kappa(\omega, A).
		% \end{align}
	% For interventional consistency we find the necessary and sufficient condition
	% \begin{align}
		% \begin{split}
			%     \int K_{\rho^{-1}(S^2)}^1&(\omega, \d \omega')\kappa(\omega', A)
			%     = \P^2(A)
			%     \\&\overset{!}{=} \int \P^2(\d \omega) K_{S^2}^2(\omega, A)
			%     \end{split}
		% \end{align}
	% In general, this condition is consistent with many different kernels $K^2_{S^2}$.
	
	\subsection{Abstractions}\label{subsec:abstractions}
	Note that Example~\ref{ex:abs} is different from Examples~\ref{ex:incl} and \ref{ex:sub} in that it compresses the representation
	while the other two consider an extension of the system. 
	As these are different objectives, we consider the following definition.
	\begin{definition}[Abstractions]\label{def:abstraction}
		The pair of maps \((\kappa,\rho)\) between measurable spaces \((\Omega^1,\mc{H}^1)=\otimes_{t\in T^1}(E_t,\mathscr{E}_t)\) and \((\Omega^2,\mc{H}^2)=\otimes_{t\in T^2}(E_t,\mathscr{E}_t)\) is called an \textit{abstraction} if \(\rho:T^1\to T^2\) is surjective.
	\end{definition}
	In the case of abstractions it is often sufficient to consider deterministic maps, motivating the following definition.
	\begin{definition}[Perfect Abstractions]\label{def:perfect_abstraction}
		An abstraction \((\kappa,\rho)\) is called a perfect abstraction if \(\kappa\) is deterministic, i.e., \(\kappa=\kappa_f\) for some measurable \(f:\Omega^1\to\Omega^2\), and moreover \(f\) is surjective.
	\end{definition}
	We finally remark that one further setting of potential interest would be to consider the inverse of an abstraction, i.e., a setting where a summary variable \(X\) is mapped to a more detailed description \((X_1,X_2)\). However, to accommodate such transformations we need a slightly different framework than the one presented here. Roughly, we need to consider \(\rho:T^1\to\mc{P}(T^2)\) with \(\rho(t_1)\cap\rho(t'_1)=\emptyset\) for \(t_1,t'_1\in T_1\), and interventions on all sets \(S\subset T^1\) can be expressed as interventions on the target \(\mc{C}^2\) (i.e., the more fine-grained representations), while this is reversed in our case so that those two settings are dual to each other. We do not pursue this here any further, as those transformations are of more limited interest and applicability. Let us emphasise nevertheless that it seems ambitious to handle all cases in one framework. Indeed, combining variables in a summary variable or splitting variables in a more fine-grained description are
	meaningful operations, but it is less clear to interpret in a causal manner a definition of a transformation \((X_1,X_2)\to(Y_1,Y_2)\) that allows both at the same time. For example, intervening on \(X_1\), in general, then does not correspond to a meaningful causal operation on the variables \((Y_1,Y_2)\). We also remark that this attempt has not been made in the SCM literature, where the focus is almost exclusively on abstractions. 
	
	\section{Comparison with Abstraction in the SCM framework}\label{sec:comparison}
	\citet{rubenstein2017causal} gives the definition of \textit{exact transformations} between SCMs. While being the seminal work on the theory of causal abstractions, it is probably also the most relevant to compare to our proposals. We first recall some essential aspects of their definition of SCMs (or SEMs, for structural equation models, by their nomenclature)\footnote{In this section, some imported notations might clash with ours; the clashes are restricted to this section and should not cause any confusion.}. 
	\begin{definition}[{\citep[Definition 1]{rubenstein2017causal}}]\label{def:sem}
		Let \(\mathbb{I}_X\) be an index set. An SEM \(\mathcal{M}_X\) over variables \(X=(X_i:i\in\mathbb{I}_X\) taking values in \(\mathcal{X}\) is a tuple \((\mathcal{S}_X,\mathbb{P}_E)\), where
		\begin{itemize}
			\item \(\mathcal{S}_X\) is a set of structural equations, i.e. the set of equations \(X_i=f_i(X,E_i)\) for \(i\in\mathbb{I}_X\);
			\item \(\mathbb{P}_E\) is a distribution over the exogenous variables \(E=(E_i:i\in\mathbb{I}_X)\). 
		\end{itemize}
	\end{definition}
	Note that their definition of SCMs is a bit more general than standard ones in the literature (e.g. \citep[p.83, Definition 6.2]{peters2017elements}), in that they allow, for example, cycles and latent confounders, but they simply insist that there must be a unique solution to any interventions. They also consider a specific set of \say{allowed interventions}, rather than considering all possible interventions. We also recall some essential aspects of the notion of exact transformations. 
	\begin{definition}[{\citep[Definition 3]{rubenstein2017causal}}]\label{def:exact_transformations}
		Let \(\mathcal{M}_X\) and \(\mathcal{M}_Y\) be SCMs, and \(\tau:\mathcal{X}\rightarrow\mathcal{Y}\) a function. We say that \(\mathcal{M}_Y\) is an \textit{exact \(\tau\)-transformation} of \(\mathcal{M}_X\) if, there exists a surjective mapping \(\omega\) of the interventions such that for any intervention \(i\), \(\mathbb{P}^i_{\tau(X)}=\mathbb{P}^{\omega(i)}_Y\). 
	\end{definition}
	Note that this definition is trying to capture the same concept as our notion of interventional consistency given in (\ref{eq:interventional_consistency}): that interventions and transformations commute. However, there are several aspects in which our proposal is more appealing.
	\begin{itemize}
		\item They only consider deterministic maps \(\tau:\mathcal{X}\rightarrow\mathcal{Y}\), whereas we allow the map \(\rho\) to be stochastic.
		\item They have to find a separate map \(\omega\) \textit{between the interventions themselves}, whereas our map \(\rho\) also determines the transformation of the causal kernels.
		\item By insisting on surjectivity of \(\omega\), they only allow the consideration of abstraction, whereas we can consider more general transformations of causal spaces, such as inclusions considered in Example \ref{ex:incl}. 
	\end{itemize}
	Nevertheless, restricted to considerations amenable to both approaches, the notions coincide. For example, we return to Example \ref{ex:abs}, where we already showed that \(f_*\mathbb{P}=\mathbb{Q}\), \(f_*K_{\{X_1,X_2\}}=L_X\) and \(f_*K_{\{Y_1,Y_2\}}=L_Y\), which implies that two-variable SCM is an exact transformation of the four-variable SCM according to Definition \ref{def:exact_transformations}. 
	
	Finally, we mention that \citet{beckers2019abstracting} criticise exact transformations of \citet{rubenstein2017causal} on the basis that probabilities and allowed interventions can mask significant differences between SCMs, and then proceed to propose definitions of abstractions that depend only on the structural equations, independently of probabilities. We remark that this criticism is not valid in our framework, in that the interventional consistency of our transformations is imposed independently of probabilities, making it impossible to mask them with the choice of probability measures. That this is possible with SCMs is an artifact of the fact that in SCMs, the observational and interventional measures are coupled through the exogenous distribution, whereas in causal spaces they are completely decoupled. Moreover, we consider all possible interventions rather than a reduced set of allowed interventions. We also remark that, since probabilities and causal kernels are the primitive objects in our framework, rather than being derived by other primitive objects (namely the structural equations), it does not make sense for the transformation to be defined independently of probabilities, as done by \citet{beckers2019abstracting}. 
	
	\section{Further Properties of Causal Transformations}\label{sec:results}
	In this section we investigate various properties of causal transformations
	and connect them to the notions introduced in Section~\ref{sec:preliminaries}.
	\subsection{Existence and Uniqueness of Causal Transformations}\label{subsec:exuniqmaps}
	First, we have the following lemma on the composition of causal transformations. Recall that for two probability kernels \(\kappa_1:\Omega^1\times \mc{H}^2\to[0,1]\) mapping  \((\Omega^1,\mc{H}^1)\) to \((\Omega^2,\mc{H}^2)\) and \(\kappa_2:\Omega^2\times\mc{H}^3\to[0,1]\) mapping \((\Omega^2,\mc{H}^2)\) to \((\Omega^3,\mc{H}^3)\) the concatenation defined by \citep[p.39]{cinlar2011probability}
	\[\kappa_1\circ \kappa_2(\omega_1, A) = \int \kappa_1(\omega_1, \d\omega_2) \kappa_2(\omega_2,A)\]
	defines a probability kernel from \((\Omega^1,\mc{H}^1)\) to \((\Omega^3,\mc{H}^3)\).
	\begin{restatable}[Compositions of Causal Transformations]{lemma}{composition}\label{lem:composition}
		Let \((\kappa_1,\rho_1):\mc{C}^1\to\mc{C}^2\) and \((\kappa_2,\rho_2):\mc{C}^2\to\mc{C}^3\) be causal transformations.
		If $(\kappa_1, \rho_1)$ is an abstraction then 
		$(\kappa_3, \rho_3)=(\kappa_1\circ \kappa_2, \rho_1\circ \rho_2):\mc{C}^1\to\mc{C}^3$ is a causal transformation.
	\end{restatable}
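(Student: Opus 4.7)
The plan is to verify each of the three conditions of Definition~\ref{def:causal_transformation} for the composed pair $(\kappa_3,\rho_3)=(\kappa_1\circ\kappa_2,\rho_1\circ\rho_2)$, using the surjectivity hypothesis on $\rho_1$ at the two places where it is genuinely needed. The useful preliminary identities are $\rho_3(T^1)=\rho_2(\rho_1(T^1))=\rho_2(T^2)$ and $\mc{H}^2_{\rho_1(T^1)}=\mc{H}^2$ (both consequences of surjectivity), together with $\rho_3^{-1}(S)=\rho_1^{-1}(\rho_2^{-1}(S))$ for $S\subset T^3$.

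Admissibility and distributional consistency are the easy steps. For admissibility, given $S\subset\rho_3(T^1)$ and $A\in\mc{H}^3_S$, admissibility of $\kappa_2$ yields that $\kappa_2(\cdot,A)$ is $\mc{H}^2_{\rho_2^{-1}(S)}$-measurable; extending admissibility of $\kappa_1$ from indicators to bounded measurable integrands by a standard monotone class argument then shows that $\omega\mapsto\int\kappa_1(\omega,\d\omega_2)\kappa_2(\omega_2,A)$ is $\mc{H}^1_{\rho_1^{-1}(\rho_2^{-1}(S))}=\mc{H}^1_{\rho_3^{-1}(S)}$-measurable. Distributional consistency follows by iterating the corresponding consistencies:
\[\int\P^1(\d\omega_1)\kappa_3(\omega_1,A)=\int\P^2(\d\omega_2)\kappa_2(\omega_2,A)=\P^3(A).\]

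The substantial step is interventional consistency. Given $S\subset\rho_3(T^1)$ and $A\in\mc{H}^3_{\rho_3(T^1)}$, the plan is to prove equality of $\int K^1_{\rho_3^{-1}(S)}(\omega,\d\omega')\kappa_3(\omega',A)$ and $\int\kappa_3(\omega,\d\omega')K^3_S(\omega',A)$ by a two-stage rewriting of the left-hand side. First, since $g(\omega_2):=\kappa_2(\omega_2,A)$ is $\mc{H}^2_{\rho_2^{-1}(S)}$-measurable and hence (by $\mc{H}^2_{\rho_1(T^1)}=\mc{H}^2$) also $\mc{H}^2_{\rho_1(T^1)}$-measurable, the monotone-class extension of interventional consistency of $\kappa_1$ with $\tilde S=\rho_2^{-1}(S)\subset\rho_1(T^1)$ rewrites the left-hand side as
\[\int\kappa_1(\omega,\d\omega_2')\int K^2_{\rho_2^{-1}(S)}(\omega_2',\d\omega_2)\,\kappa_2(\omega_2,A).\]
Second, since $A\in\mc{H}^3_{\rho_3(T^1)}=\mc{H}^3_{\rho_2(T^2)}$, interventional consistency of $\kappa_2$ applies directly to the inner integral and turns it into $\int\kappa_2(\omega_2',\d\omega_3)K^3_S(\omega_3,A)$; folding the outer integral back into the definition of $\kappa_3$ gives the desired right-hand side.

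The main obstacle is the first rewriting above: interventional consistency of $\kappa_1$ is stated only for events $A\in\mc{H}^2_{\rho_1(T^1)}$, so to apply it with the integrand $g=\kappa_2(\cdot,A)$ one needs $g$ to be $\mc{H}^2_{\rho_1(T^1)}$-measurable. This is where the surjectivity hypothesis on $\rho_1$ is indispensable; without it, $g$ could depend on coordinates outside $\rho_1(T^1)$ and the step would break down. The monotone-class extension itself is routine, so once surjectivity has placed $g$ in the correct measurability class, the remainder of the argument is just bookkeeping on the index sets.
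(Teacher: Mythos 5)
Your proposal is correct and follows essentially the same route as the paper's proof: the same use of $\rho_1$'s surjectivity to place $\rho_2^{-1}(S)$ inside $\rho_1(T^1)$, the same monotone-class/simple-function extension of admissibility and interventional consistency to general integrands, and the same chain of iterated-integral identities (merely read from the opposite end). No gaps.
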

	The proof can be found in Appendix~\ref{app:proofs2}. We remark that, unfortunately, we cannot remove the assumption that the first transformation is an abstraction. Let us clarify this through an example. 
	\begin{example}\label{ex:composition_abstraction}
		Consider an SCM with equations
		\begin{alignat*}{2}
			X_1&=N_1,\\
			X_2&=N_2,\\
			Y&=X_1+X_2+N_Y
		\end{alignat*}
		where \(N_1\), \(N_2\), and \(N_Y\) follow independent standard normal distributions. Then we can consider the causal space \(\mc{C}^1\) containing \((X_1,Y)\), the causal space \(\mc{C}^2\) containing \((X_1,X_2,Y)\) and an abstraction \(\mc{C}^3\) containing \((X_1+X_2,Y)\). Then we can embed \(\mc{C}^1\to\mc{C}^2\) and there is an abstraction \(\mc{C}^2\to\mc{C}^3\) which are both transformations of causal spaces. 
		
		However, their concatenation is not a causal transformation because it is not even admissible (and also interventional consistency does not hold for
		the intervention \(K^3_{X}\) as this cannot be expressed by \(K^1_{X_1}\)). Note that \(P_{X_2|X_1=x_1,Y=y}=N((y-x_1)/2,1/2)\) and therefore we have \(\kappa_1((x_1,y),\cdot)=\delta_{x_1}\otimes N((y-x_1)/2,1/2)\otimes\delta_{y}\).
		
		We also have \(\kappa_2((x_1,x_2,y),\cdot)=\delta_{(x_1+x_2,y)}\). Thus, their concatenation is given by
		\[\kappa_3((x_1,y),\cdot)=N((y+x_1)/2,1/2)\otimes\delta_y.\]
		So the first coordinate is not measurable with respect to \(\mc{H}^1_{X_1}\).
	\end{example}
	This shows that we lose measurability along the concatenation because the variables added in the  more complete description \(\mc{C}^2\) may depend on all other variables. 
	
	Let us now generalise Example~\ref{ex:sub} to general SCMs.
	\begin{restatable}[Inclusion of SCMs]{lemma}{scm}\label{lem:scm}
		Consider an acyclic SCM on endogenous variables \((X_1,\ldots,X_d)\in\R^d\) with observational distribution \(\P\). Let \(S\subset[d]\), \(R=S^c=[d]\setminus S\) and consider causal spaces \(\mc{C}^1=(\Omega^1,\mc{H}^1,\P^S,\K)\) and \(\mc{C}^2=(\Omega^2,\mc{H}^2,\P,\L)\), where we have \((\Omega^1,\mc{H}^1)=(\mathbb{R}^{|S|},\mathscr{B}(\mathbb{R}^{|S|}))\) and \((\Omega^2,\mathscr{H}^2)=(\mathbb{R}^d,\mathscr{B}(\mathbb{R}^d))\). Moreover, \(\P^S\) is the marginal distribution on the variables in \(S\), and the causal mechanisms \(\K\) and \(\L\) are derived from the SCM. In particular, \(\K\) is a marginalisation of \(\L\), namely, for any \(\omega\in\Omega^2\), any event \(A\in\mathscr{H}^1\) and any \(S'\subseteq S\), we have that \(K_{S'}(\omega,A)=L_{S'}(\omega,A)\). 
		
		Consider the map \(\rho:S\hookrightarrow[d]\) and \(\kappa(\cdot,A)=\P_{\mathscr{H}^1}(A)\). Then \((\rho,\kappa)\) is a causal transformation from \(\mc{C}^1\) to \(\mc{C}^2\).
	\end{restatable}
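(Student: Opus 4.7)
My plan is to exploit the fact that, because $\R^d$ is a standard Borel space, the conditional probability $\P_{\mc{H}^1}$ admits a regular version, which lets me write $\kappa$ explicitly as a disintegration. Concretely, let $Q(\omega_S,\d\omega_R)$ be a regular conditional distribution of the $R$-coordinates given the $S$-coordinates under $\P$, so that $\P(\d\omega_S,\d\omega_R)=\P^S(\d\omega_S)\,Q(\omega_S,\d\omega_R)$. Then the natural choice is $\kappa(\omega,\,\cdot\,)=\delta_\omega\otimes Q(\omega,\,\cdot\,)$, which makes $\kappa(\cdot,A)$ a version of $\P_{\mc{H}^1}(A)$ for every $A\in\mc{H}^2$. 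All three conditions (admissibility, distributional consistency, interventional consistency) will then follow from unwinding this disintegration together with the marginalisation hypothesis on $\K$ and $\L$.

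\textbf{Admissibility and distributional consistency.} Since $\rho$ is the inclusion $S\hookrightarrow[d]$, we have $\rho^{-1}(S')=S'$ for every $S'\subseteq S$. For $A\in\mc{H}^2_{S'}$ the event $A$ depends only on $S'$-coordinates and hence belongs to $\mc{H}^2_S$; in particular $Q(\omega,\cdot)$ never affects it and $\kappa(\omega,A)=\bs{1}_{A}(\omega_{S'})$, which is manifestly $\mc{H}^1_{S'}$-measurable. This proves admissibility. Distributional consistency is just Fubini applied to the disintegration:
\[
\int\P^S(\d\omega)\,\kappa(\omega,A)
=\int\P^S(\d\omega_S)\int Q(\omega_S,\d\omega_R)\bs{1}_A(\omega_S,\omega_R)
=\P(A)
\]
for every $A\in\mc{H}^2$.

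\textbf{Interventional consistency (main step).} Fix $S'\subseteq S=\rho(T^1)$, $\omega\in\Omega^1$ and $A\in\mc{H}^2_{\rho(T^1)}=\mc{H}^2_S$. Because $A\in\mc{H}^2_S$, there exists $A'\in\mc{H}^1$ with $\bs{1}_A(\omega')=\bs{1}_{A'}(\omega'_S)$, so on the left-hand side $\kappa(\omega',A)=\bs{1}_{A'}(\omega'_S)$, and since $K^1_{S'}$ is a kernel on $\Omega^1$ indexed by $S$-coordinates we obtain
\[
\int K^1_{S'}(\omega,\d\omega')\,\kappa(\omega',A)=K^1_{S'}(\omega,A').
\]
For the right-hand side, the marginalisation hypothesis $K_{S'}(\cdot,A')=L_{S'}(\cdot,A)$ (which is precisely the statement that the $S$-marginal of $L_{S'}(\omega,\cdot)$ coincides with $K^1_{S'}(\omega_S,\cdot)$) gives $K^2_{S'}(\omega',A)=K^1_{S'}(\omega'_S,A')$, and then the fact that the $S$-marginal of $\kappa(\omega,\cdot)$ is $\delta_\omega$ yields
\[
\int\kappa(\omega,\d\omega')\,K^2_{S'}(\omega',A)
=\int\kappa(\omega,\d\omega')\,K^1_{S'}(\omega'_S,A')
=K^1_{S'}(\omega,A'),
\]
matching the left-hand side. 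This establishes interventional consistency.

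\textbf{Main obstacle.} The actual computation is short; the subtlety is purely bookkeeping: one must carefully identify $\mc{H}^1$ with $\mc{H}^2_S$ via the natural projection so that the hypothesis $K_{S'}(\omega,A)=L_{S'}(\omega,A)$ is parsed consistently (on the left, $\omega$ is interpreted as $\omega_S$, and on the right, $A$ is identified with $A\times\R^{|R|}$), and one must invoke the regular conditional distribution so that $\kappa$ is genuinely a probability kernel rather than merely an a.s.-defined object. Once those identifications are fixed, both sides of the interventional consistency equation collapse to $K^1_{S'}(\omega,A')$, and the proof is complete.
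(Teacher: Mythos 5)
Your proof is correct and follows essentially the same route as the paper's: both arguments reduce the two sides of the interventional consistency identity to $K_{S'}(\omega,A')=L_{S'}(\omega,A)$ using (a) that $\kappa(\cdot,A)$ acts as the indicator $\bs{1}_{A}$ on events in $\mc{H}^2_S$, and (b) that $L_{S'}(\cdot,A)$ is $\mc{H}^1$-measurable so integrating it against $\kappa(\omega,\cdot)$ returns its value at $\omega$. Your explicit disintegration $\kappa(\omega,\cdot)=\delta_\omega\otimes Q(\omega,\cdot)$ is just a concrete regular version of the conditional probability $\P_{\mc{H}^1}$ that the paper manipulates abstractly, so the difference is purely presentational.
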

	The proof can be found in Appendix~\ref{app:proofs2}.
	
	We now investigate to what degree distributional and interventional consistency determines the causal structure on the target space. We show that generally the causal structure on \(\mc{H}_{\rho(T)}^2\) is quite rigid.
	\begin{restatable}[Rigidity of target causal structure]{lemma}{rigidity}\label{lem:rigidity}
		Let ${\mc{C}}^2=(\Omega^2,\mc{H}^2,\P^2,{\K}^2)$ and $\tilde{\mc{C}}^2=(\Omega^2,\mc{H}^2,\tilde{\P}^2,\tilde{\K}^2)$
		be two causal spaces with the same underlying measurable space. 
		Let $(\kappa, \rho)$ be an admissible pair
		for the measurable spaces $(\Omega^1,\mc{H}^1)$ and $(\Omega^2,\mc{H}^2)$.
		Assume that the pair $(\kappa, \rho)$ defines causal transformations
		$\p:\mc{C}^1\to \mc{C}^2$ 
		and $\tilde{\p}:\mc{C}^1\to\tilde{\mc{C}}^2$
		be 
		a causal transformations. 
		Then $\P^2=\tilde{\P}^2$,
		and for all $A\in \mc{H}^2_{\rho(T^1)}$
		and any $S\subset T^2$
		\begin{align*}
			K^2_S(\omega,A)
			= \tilde{K}^2_S(\omega,A)
			\quad \text{for $\P^2=\tilde{\P}^2$
				a.\ e.\ $\omega\in \Omega^2$}.
		\end{align*} 
	\end{restatable}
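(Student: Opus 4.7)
The plan is to split the conclusion into the two claims (equality of measures, and almost-sure equality of causal kernels), and handle them separately using the two defining properties of a causal transformation. The equality $\P^2 = \tilde\P^2$ is immediate: both transformations $\p$ and $\tilde\p$ share the same domain $\mc{C}^1$ and the same map $\kappa$, so distributional consistency (Equation~\eqref{eq:distributional_consistency}) forces $\P^2(A) = \int \P^1(\d\omega)\kappa(\omega,A) = \tilde\P^2(A)$ for every $A\in\mc{H}^2$.

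For the kernel equality, fix $S\subset\rho(T^1)$ and $A\in\mc{H}^2_{\rho(T^1)}$. The key idea is to test against the natural $\sigma$-algebra where $K^2_S(\cdot,A)$ lives. For any $B\in\mc{H}^2_S$, note that $A\cap B\in\mc{H}^2_{\rho(T^1)}$, so interventional consistency (Equation~\eqref{eq:interventional_consistency}) applied to $A\cap B$ in both $\p$ and $\tilde\p$ gives two identities whose left-hand sides involve only $\kappa$ and $\K^1$, hence coincide. Subtracting and invoking axiom~(\ref{interventionaldeterminism}) of Definition~\ref{def:causal_space} to rewrite $K^2_S(\omega',A\cap B) = \1_B(\omega')K^2_S(\omega',A)$ (and the same for $\tilde K^2_S$), I arrive at
\[\int_B \kappa(\omega,\d\omega')K^2_S(\omega',A) = \int_B \kappa(\omega,\d\omega')\tilde K^2_S(\omega',A)\]
for every $\omega\in\Omega^1$ and every $B\in\mc{H}^2_S$.

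Next I would integrate this identity against $\P^1(\d\omega)$ and use Fubini together with the already-established distributional consistency $\int\P^1(\d\omega)\kappa(\omega,\cdot) = \P^2$ to obtain $\int_B K^2_S(\omega',A)\,\d\P^2(\omega') = \int_B \tilde K^2_S(\omega',A)\,\d\P^2(\omega')$ for all $B\in\mc{H}^2_S$. Because both $K^2_S(\cdot,A)$ and $\tilde K^2_S(\cdot,A)$ are $\mc{H}^2_S$-measurable (by the factorization of causal kernels through $\Omega^2_S$ noted after Definition~\ref{def:causal_space} and detailed in Appendix~\ref{app:factorization}), the standard uniqueness criterion for integrable $\mc{H}^2_S$-measurable functions yields $K^2_S(\cdot,A) = \tilde K^2_S(\cdot,A)$ $\P^2$-a.e., which is the desired conclusion.

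The step I expect to be the main obstacle is handling indices $S\subset T^2$ that are not contained in $\rho(T^1)$: interventional consistency constrains kernels only for $S\subset\rho(T^1)$, so the coordinates of $T^2\setminus\rho(T^1)$ are not directly pinned down by $\p$. One must either observe that the claim for $A\in\mc{H}^2_{\rho(T^1)}$ reduces to the $S\cap\rho(T^1)$ case via the interventional determinism axiom and the product structure of the added components, or restrict the stated range of $S$ to $\rho(T^1)$; this is the delicate book-keeping that the proof will need to make precise.
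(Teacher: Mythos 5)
Your argument is correct and is essentially the paper's own proof: both establish $\P^2=\tilde\P^2$ from distributional consistency, then apply interventional consistency to $A\cap B$ with $B\in\mc{H}^2_S$, rewrite via axiom~(\ref{interventionaldeterminism}), integrate against $\P^1$ and invoke distributional consistency to compare the two kernels $\P^2$-a.e.\ (the paper just instantiates your ``uniqueness criterion'' with the single set $B=\{K^2_S(\cdot,A)<\tilde K^2_S(\cdot,A)\}$ and a positivity argument). Your closing worry about $S\not\subset\rho(T^1)$ is well founded: the paper's proof silently restricts to $S\subset\rho(T^1)$ and never addresses the full range $S\subset T^2$ claimed in the statement, so you have in fact identified a (minor) overstatement in the lemma rather than a gap in your own argument.
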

	The proof is in Appendix~\ref{app:proofs2}.
	We cannot expect to derive much stronger results for general causal transformation because interventional consistency does not restrict
	$K^2(\omega,A)$ for $\omega$ not in the support of $\P^2$
	or $A\notin \mc{H}_{\rho(T)}^2$.
	E.g., in the setting of Example~\ref{ex:incl}
	the causal structure on the second factor is arbitrary. 
	
	However, when we consider deterministic maps \((f,\rho)\) such that \(f:(\Omega^1,\mc{H}^1)\to (\Omega^2,\mc{H}^2)\) and \(\rho\) are surjective, then there is at most one causal structure on the target space \((\Omega^2,\mc{H}^2)\) such that the pair \((f,\rho)\) is a causal transformation (and thus a perfect abstraction).
	\begin{restatable}[Surjective Deterministic Maps]{lemma}{pushforward}\label{lem:ex_uniq_pushforward}
		Suppose \((f,\rho)\) is an admissible pair for the causal space \(\mc{C}^1=(\Omega^1,\mc{H}^1,\P^1,\K^1)\) to the measurable space \(X^2=(\Omega^2,\mc{H}^2)\) and assume that \(\rho\) is surjective and \(f:\Omega_1\to\Omega_2\) measurable. If \(f\) is surjective, there exists at most one causal space \(\mc{C}^2=(\Omega^2,\mc{H}^2,\P^2,\K^2)\) such that \((f,\rho):\mc{C}^1\to\mc{C}^2\) is a causal transformation.
		
		If, in addition, \(K^1_{\rho^{-1}(S^2)}(\cdot,A)\) is measurable with respect to \(f^{-1}(\mc{H}^2_{S^2})\) for all \(A\in f^{-1}(\mc{H}^2)\) and all \(S^2\subset T^2\)
		then a unique causal space \(\mc{C}^2\) exists such that \((f,\rho):\mc{C}^1\to\mc{C}^2\) is a causal transformation.
	\end{restatable}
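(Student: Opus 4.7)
The plan is to exploit distributional and interventional consistency as rigid constraints. If two candidates $\mc{C}^2$ and $\tilde{\mc{C}}^2$ both make $(f,\rho)$ a causal transformation, equation~\eqref{eq:distributional_consistency} immediately forces $\P^2 = f_\ast \P^1 = \tilde{\P}^2$. Because $\rho$ is surjective, $\rho(T^1)=T^2$, so $\mc{H}^2_{\rho(T^1)}=\mc{H}^2$ and \eqref{eq:int_consistent_f} applies to all $A\in\mc{H}^2$ and all $S\subset T^2$, giving
\[K^2_S(f(\omega),A)\;=\;K^1_{\rho^{-1}(S)}(\omega,f^{-1}(A))\;=\;\tilde{K}^2_S(f(\omega),A)\]
for every $\omega\in\Omega^1$. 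Surjectivity of $f$ then lifts this equality from the image of $f$ to all of $\Omega^2$, so $K^2_S=\tilde{K}^2_S$ on $\mc{H}^2$.

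\textbf{Existence.} For the second part, the strategy is to construct $(\P^2,\K^2)$ by pushing the structure of $\mc{C}^1$ along $f$. Set $\P^2:=f_\ast\P^1$. For each $S\subset T^2$ and each $A\in\mc{H}^2$, the hypothesis says that $\omega\mapsto K^1_{\rho^{-1}(S)}(\omega, f^{-1}(A))$ is $f^{-1}(\mc{H}^2_S)$-measurable, since $f^{-1}(A)\in f^{-1}(\mc{H}^2)$. By the Doob--Dynkin factorisation lemma (applicable because the target is $[0,1]$) this function equals $g_{S,A}\circ f$ for a unique (thanks to surjectivity of $f$) $\mc{H}^2_S$-measurable map $g_{S,A}:\Omega^2\to[0,1]$. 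Define $K^2_S(\omega_2,A):=g_{S,A}(\omega_2)$; equivalently, choose any $\omega\in f^{-1}(\{\omega_2\})$ and set $K^2_S(\omega_2,A):=K^1_{\rho^{-1}(S)}(\omega,f^{-1}(A))$. Equation~\eqref{eq:int_consistent_f}, i.e.\ interventional consistency, then holds by construction, and distributional consistency is immediate from $\P^2=f_\ast\P^1$.

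\textbf{Remaining verifications and main obstacle.} What remains is to check that $(\P^2,\K^2)$ defines a causal space, and this reduces to routine pull-backs along $f$. For any fixed preimage $\omega$ of $\omega_2$, the set function $K^2_S(\omega_2,\cdot)$ coincides with the pushforward $f_\ast K^1_{\rho^{-1}(S)}(\omega,\cdot)$, which is automatically a probability measure; measurability of $\omega_2\mapsto K^2_S(\omega_2,A)$ in $\mc{H}^2_S$ is built into $g_{S,A}$. Axiom (i) of Definition~\ref{def:causal_space} reduces to $K^1_\emptyset(\omega,f^{-1}(A))=\P^1(f^{-1}(A))=\P^2(A)$. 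For axiom (ii), admissibility in the sense of Definition~\ref{def:admissible} yields $f^{-1}(A)\in\mc{H}^1_{\rho^{-1}(S)}$ whenever $A\in\mc{H}^2_S$, so the interventional determinism of $K^1$ pulls back to $K^2_S(\omega_2,A\cap B)=\1_A(\omega_2)K^2_S(\omega_2,B)$. The main obstacle, and the one place where the extra hypothesis is indispensable, is well-definedness of the pointwise recipe: the measurability condition is precisely what guarantees that $K^1_{\rho^{-1}(S)}(\omega,f^{-1}(A))$ depends only on $f(\omega)$, so that distinct preimages of $\omega_2$ never return distinct values; without it, no causal structure realising the transformation need exist.
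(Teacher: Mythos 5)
Your proof is correct and follows essentially the same route as the paper's: distributional consistency pins down $\P^2=f_\ast\P^1$, surjectivity of $\rho$ and $f$ pin down $\K^2$ via \eqref{eq:int_consistent_f}, and for existence the Factorisation Lemma applied to the measurability hypothesis produces the kernels, after which the two causal-space axioms are verified by pulling back along $f$. Your explicit check that $K^2_S(\omega_2,\cdot)=f_\ast K^1_{\rho^{-1}(S)}(\omega,\cdot)$ is a probability measure, and your remark that the measurability hypothesis is exactly what makes the pointwise recipe well-defined, are slightly more detailed than the paper's treatment but do not change the argument.
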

	The proof is in Appendix~\ref{app:proofs2}.
	To motivate the measurability condition for $K^1(\cdot, A)$
	we remark that interventional consistency requires $K^1(\omega, A)=K^1(\omega',A)$ for 
	$\omega$, $\omega'$ with $f(\omega)=f(\omega')$ and the measurability condition in the result is a slightly stronger condition than this.
	
	% \snote{Actually we also need to show the $A\cap B$ property!!}
	% \snote{I think we should just add the measurability assumption plus factorisation lemma to the assumptions. It does not sound to bad and we need existence later on.}
	% \snote{Old Text: To prove existence of $\K^2$ it remains to verify measurability of
		% $K_S(\cdot, A)$ which is not implied by the assumptions of the lemma.
		% Under suitable assumptions this can be shown, e.g., when the state space is finite
		% or when the state spaces are $\R^{d_1}$ and $\R^{d_2}$ and $f$ is a differentiable function with non-vanishing differential. Note that by the factorisation lemma we can equivalently show that $K(\cdot, f^{-1}(A))$ is measurable with respect to $f^{-1}(\mc{H}^2)$.}
	
	Next, we show that interventions on a space can be pushed forward along a perfect abstraction.
	\begin{restatable}[Perfect Abstraction on Intervened Spaces]{lemma}{intervention}\label{lem:intervention_spaces}
		Let \(\mc{C}^1=(\Omega^1,\mc{H}^1,\P^1,\K^1)\) with \((\Omega^1,\mc{H}^1)\) a product with index set \(T^1\) and \(\mc{C}^2=(\Omega^2,\mc{H}^2,\P^2,\K^2)\) with \((\Omega^2,\mc{H}^2)\) a product with index set \(T^2\) be causal spaces, and let \((f,\rho):\mc{C}^1\to\mc{C}^2\) be a perfect abstraction. 
		
		Let \(U^1=\rho^{-1}(U^2)\subseteq T^1\) for some \(U^2\subseteq T^2\). Let \(\Q^1\) be a probability measure on \((\Omega^1,\mc{H}^1_{U^1})\)
		and \(\L^1\) a causal mechanism on \((\Omega^1,\mc{H}^1_{U^1},\Q^1)\). Suppose that, for all \(S\subseteq U^2\) and \(A\in\mathscr{H}^1\), the map \(L_{\rho^{-1}(S)}^1(\cdot,A)\) is measurable with respect to \(f^{-1}(\mc{H}^2_S)\), and consider the intervened causal spaces
		\begin{alignat*}{2}
			\mc{C}^1_I&=(\Omega^1,\mc{H}^1,(\P^1)^{\textnormal{do}(U^1,\Q^1)},(\K^1)^{\textnormal{do}(U^1,\Q^1,\L^1)}),\\
			\mc{C}^2_I&=(\Omega^2,\mc{H}^2,(\P^2)^{\textnormal{do}(U^2,\Q^2)}, (\K^2)^{\textnormal{do}(U^2,\Q^2,\L^2)}),
		\end{alignat*}
		where \(\Q^2=f_\ast\Q^1\) and \(\L^2\) is the unique family of kernels satisfying \(L^2_{S}(f(\omega),A)=L^1_{\rho^{-1}(S)}(\omega,f^{-1}(A))\)
		for all \(\omega\in\Omega^1\), \(A\in\mc{H}^2\), and \(S\subseteq U^2\). Then \((f,\rho):\mc{C}^1_I\to\mc{C}^2_I\) is a perfect abstraction.
	\end{restatable}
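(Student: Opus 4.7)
The plan is to verify the two consistency conditions of Definition~\ref{def:causal_transformation} for the intervened spaces, since admissibility and surjectivity of $(f,\rho)$ are inherited directly from the given perfect abstraction $(f,\rho):\mc{C}^1\to \mc{C}^2$. Throughout, I exploit two structural facts: (a) $\rho^{-1}(S\cap U^2)=\rho^{-1}(S)\cap U^1$ and $\rho^{-1}(S\cup U^2)=\rho^{-1}(S)\cup U^1$ for every $S\subseteq T^2$; (b) by admissibility, $f$ factors through projections, so there is an induced measurable map $\bar f:(\Omega^1_{U^1},\mc{H}^1_{U^1})\to(\Omega^2_{U^2},\mc{H}^2_{U^2})$ with $\pi_{U^2}\circ f=\bar f\circ \pi_{U^1}$.

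For distributional consistency, first I would unfold
\[(\P^1)^{\textnormal{do}(U^1,\Q^1)}(f^{-1}(A))=\int \Q^1(\d\omega)\,K^1_{U^1}(\omega,f^{-1}(A))\]
and apply the original interventional consistency $K^1_{\rho^{-1}(U^2)}(\omega,f^{-1}(A))=K^2_{U^2}(f(\omega),A)$. A change of variables using $\Q^2=f_\ast\Q^1$ (well defined since $f$ is $\mc{H}^1_{U^1}/\mc{H}^2_{U^2}$-measurable) then yields $(\P^2)^{\textnormal{do}(U^2,\Q^2)}(A)$. Here I use that $K^2_{U^2}(\cdot,A)$ is $\mc{H}^2_{U^2}$-measurable, and that surjectivity of $\rho$ ensures $\mc{H}^2_{\rho(T^1)}=\mc{H}^2$.

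The main work is interventional consistency. Starting from
\[(K^1)^{\textnormal{do}(U^1,\Q^1,\L^1)}_{\rho^{-1}(S)}(\omega,f^{-1}(A))=\int L^1_{\rho^{-1}(S\cap U^2)}(\omega,\d\omega'_{U^1})\,K^1_{\rho^{-1}(S\cup U^2)}\bigl((\omega_{T^1\setminus U^1},\omega'_{U^1}),f^{-1}(A)\bigr),\]
I apply the original interventional consistency to the inner $K^1$ kernel, converting it into $K^2_{S\cup U^2}(f(\omega_{T^1\setminus U^1},\omega'_{U^1}),A)$. The admissibility of $(f,\rho)$ then lets me compute this argument componentwise: its coordinates on $S\setminus U^2$ coincide with $f(\omega)_{S\setminus U^2}$ (they depend only on coordinates in $\rho^{-1}(S\setminus U^2)\subseteq T^1\setminus U^1$, which come from $\omega$), while its coordinates on $U^2$ equal $\bar f(\omega'_{U^1})$. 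Because $K^2_{S\cup U^2}(\cdot,A)$ is $\mc{H}^2_{S\cup U^2}$-measurable, only these coordinates matter, so the integrand factors through $\bar f$.

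The final step is to change variables along $\bar f$: the measurability hypothesis on $L^1_{\rho^{-1}(\cdot)}$ guarantees that the defining relation $L^2_S(f(\omega),A)=L^1_{\rho^{-1}(S)}(\omega,f^{-1}(A))$ uniquely specifies $\L^2$, and reading this relation at the level of measures says precisely $L^2_{S\cap U^2}(f(\omega),\cdot)=\bar f_\ast L^1_{\rho^{-1}(S\cap U^2)}(\omega,\cdot)$ on $\mc{H}^2_{U^2}$. Pushing forward the integral along $\bar f$ therefore produces
\[\int L^2_{S\cap U^2}(f(\omega),\d\sigma'_{U^2})\,K^2_{S\cup U^2}\bigl((f(\omega)_{S\setminus U^2},\sigma'_{U^2}),A\bigr)=(K^2)^{\textnormal{do}(U^2,\Q^2,\L^2)}_S(f(\omega),A),\]
completing interventional consistency. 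The hardest part is the bookkeeping underlying the componentwise identification of $f(\omega_{T^1\setminus U^1},\omega'_{U^1})$ together with justifying the change of variables; once the induced map $\bar f$ from admissibility is in hand, both reduce to careful but routine checks.
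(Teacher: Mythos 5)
Your proposal is correct and follows essentially the same route as the paper's proof: both establish distributional consistency by applying the original interventional consistency inside the $\Q^1$-integral and changing variables along $f_\ast$, and both establish interventional consistency by factoring $f$ through projections (your $\bar f$ is the paper's $\tilde f_{U^2}$), applying the original consistency to the inner kernel, and pushing the $L^1$-integral forward along the induced map to identify it with $L^2_{S\cap U^2}$. The only cosmetic difference is that the paper invokes Lemma~\ref{lem:ex_uniq_pushforward} to justify existence and uniqueness of $\L^2$, whereas you derive this directly from the measurability hypothesis; the substance is the same.
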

	The proof of this result is in Appendix~\ref{app:proofs2}.
	\subsection{Sources and causal effects under causal transformations}\label{subsec:causal_effect}
	We now study whether causal effects in target and domain of a causal transformation can be related.
	% Note that Example~\ref{ex:trivial} shows that for general stochastic maps, we cannot expect a general relation of causal effect in either direction. 
	Our first results shows that
	% in contrast
	for perfect abstractions  having no causal effect 
	in the domain implies that there is also no causal effect in the target.
	\begin{restatable}[Perfect Abstraction and No Causal Effect]{lemma}{nocausaleffect}\label{lem:no_causal_effect}
		Let \((f,\rho):\mc{C}^1\to \mc{C}^2\) be a perfect abstraction. Consider two sets \(U^2,V^2\subset T^2\) and denote \(U^1=\rho^{-1}(U^2)\) and \(V^1=\rho^{-1}(V^2)\). If \(\mc{H}^1_{U^1}\) has no causal effect on \(\mc{H}^1_{V^1}\) in \(\mc{C}^1\), then \(\mc{H}^2_{U^2}\) has no causal effect on \(\mc{H}^2_{V^2}\) in \(\mc{C}^2\).
	\end{restatable}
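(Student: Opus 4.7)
The plan is to chase the interventional consistency diagram in both directions and then invoke the hypothesis on the domain. Fix arbitrary $\omega^2\in\Omega^2$, $S^2\subseteq T^2$, and $A^2\in\mc{H}^2_{V^2}$; the goal is to show $K^2_{S^2}(\omega^2,A^2)=K^2_{S^2\setminus U^2}(\omega^2,A^2)$. Because $(f,\rho)$ is a perfect abstraction, $f$ is surjective, so I write $\omega^2=f(\omega)$ for some $\omega\in\Omega^1$; because $\rho$ is surjective, $\rho(T^1)=T^2$ and so the interventional consistency formula \eqref{eq:int_consistent_f} is available for every $A\in\mc{H}^2$ and every $S\subseteq T^2$.

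Next, I apply \eqref{eq:int_consistent_f} to $S^2$ and to $S^2\setminus U^2$, obtaining
\[
K^2_{S^2}(f(\omega),A^2)=K^1_{\rho^{-1}(S^2)}(\omega,f^{-1}(A^2)), \qquad K^2_{S^2\setminus U^2}(f(\omega),A^2)=K^1_{\rho^{-1}(S^2\setminus U^2)}(\omega,f^{-1}(A^2)).
\]
The preimage under $\rho$ commutes with set difference, so $\rho^{-1}(S^2\setminus U^2)=\rho^{-1}(S^2)\setminus\rho^{-1}(U^2)=\rho^{-1}(S^2)\setminus U^1$. The question therefore reduces to proving
\[
K^1_{\rho^{-1}(S^2)}(\omega,f^{-1}(A^2))=K^1_{\rho^{-1}(S^2)\setminus U^1}(\omega,f^{-1}(A^2)).
\]

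To apply the hypothesis that $\mc{H}^1_{U^1}$ has no causal effect on $\mc{H}^1_{V^1}$ with the index set $S=\rho^{-1}(S^2)$, I need $f^{-1}(A^2)\in\mc{H}^1_{V^1}$. This is exactly the content of admissibility of $(f,\rho)$ applied to the set $V^2\subseteq\rho(T^1)=T^2$: for $A^2\in\mc{H}^2_{V^2}$, $\mathbf{1}_{A^2}\circ f$ is $\mc{H}^1_{\rho^{-1}(V^2)}=\mc{H}^1_{V^1}$ measurable, i.e.\ $f^{-1}(A^2)\in\mc{H}^1_{V^1}$. The no-causal-effect hypothesis then yields the desired equality of the two $K^1$ quantities, and pushing back through \eqref{eq:int_consistent_f} gives the target identity for $K^2$.

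There is no serious obstacle; the proof is a direct diagram chase. The only subtle points, each of which must be invoked explicitly, are: (i) surjectivity of $f$, used to realise $\omega^2$ as $f(\omega)$ so that interventional consistency applies pointwise at every $\omega^2\in\Omega^2$ (without it we would only know equality on $f(\Omega^1)$); (ii) surjectivity of $\rho$, which ensures that the interventional consistency identity is available without any restriction $A^2\in\mc{H}^2_{\rho(T^1)}$; and (iii) the commutation of $\rho^{-1}$ with set difference, which is what makes the intervention index on the right-hand side shrink in the correct way. Admissibility provides the $\sigma$-algebra membership needed to apply the hypothesis.
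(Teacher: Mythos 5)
Your proof is correct and follows essentially the same route as the paper's: realise $\omega^2=f(\omega)$ by surjectivity, transport the kernel to $\mc{C}^1$ via interventional consistency, apply the no-causal-effect hypothesis using $f^{-1}(A^2)\in\mc{H}^1_{V^1}$ and $\rho^{-1}(S^2\setminus U^2)=\rho^{-1}(S^2)\setminus U^1$, and transport back. The only difference is that you make explicit the admissibility argument for $f^{-1}(A^2)\in\mc{H}^1_{V^1}$, which the paper merely asserts.
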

	The proof can be found in Appendix~\ref{app:proofs3}.
	
	On the other hand, we can show that when there is an active causal effect in the target space, there is also an active causal effect in the domain.
	\begin{restatable}[Perfect Abstraction and Active Causal Effects]{lemma}{activecausaleffect}\label{lem:active_effect}
		Let \((f,\rho):\mc{C}^1\to\mc{C}^2\) be a perfect abstraction. Consider two sets \(U^2,V^2\subset T^2\) and denote \(U^1=\rho^{-1}(U^2)\) and \(V^1=\rho^{-1}(V^2)\). Assume that \(\mc{H}^2_{U^2}\) has an active causal effect on \(\mc{H}^2_{V^2}\) in \(\mc{C}^2\). Then \(\mc{H}^1_{U^1}\) has an active causal effect on \(\mc{H}^1_{V^1}\) in \(\mc{C}^1\).
	\end{restatable}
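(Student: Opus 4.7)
The plan is to contrapose the conclusion of Lemma~\ref{lem:no_causal_effect} in spirit: an active causal effect downstream must lift to the source by pulling back the witnessing event and the witnessing point. Concretely, since $\mc{H}^2_{U^2}$ is actively causal to $\mc{H}^2_{V^2}$, there exist $A\in\mc{H}^2_{V^2}$ and $\omega\in\Omega^2$ with $K^2_{U^2}(\omega,A)\neq \P^2(A)$. The goal is to produce $A'\in\mc{H}^1_{V^1}$ and $\omega'\in\Omega^1$ with $K^1_{U^1}(\omega',A')\neq \P^1(A')$, which is precisely the condition for $\mc{H}^1_{U^1}$ to be actively causal to $\mc{H}^1_{V^1}$.

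The first key step is to use the surjectivity of $f$ (which is guaranteed because $(f,\rho)$ is a \emph{perfect} abstraction in the sense of Definition~\ref{def:perfect_abstraction}) to choose some $\omega'\in\Omega^1$ with $f(\omega')=\omega$, and to set $A'=f^{-1}(A)$. Note that $\rho$ is also surjective, so $\rho(T^1)=T^2$ and therefore $A\in\mc{H}^2_{V^2}\subseteq\mc{H}^2_{\rho(T^1)}$, which brings $A$ into the regime where interventional consistency in the deterministic form \eqref{eq:int_consistent_f} applies. Applying \eqref{eq:int_consistent_f} with $S=U^2$ (so $\rho^{-1}(S)=U^1$) gives
\[
K^1_{U^1}(\omega',A') = K^1_{U^1}(\omega',f^{-1}(A)) = K^2_{U^2}(f(\omega'),A) = K^2_{U^2}(\omega,A).
\]
Similarly, distributional consistency (Definition~\ref{def:causal_transformation}(i)) in the deterministic case reduces to $f_\ast\P^1=\P^2$, hence $\P^1(A')=\P^1(f^{-1}(A))=\P^2(A)$. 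Combining these two identities with the hypothesis $K^2_{U^2}(\omega,A)\neq \P^2(A)$ yields $K^1_{U^1}(\omega',A')\neq \P^1(A')$.

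It only remains to verify that $A'=f^{-1}(A)$ lies in $\mc{H}^1_{V^1}$, so that it is a legal witness for active causal effect on $\mc{H}^1_{V^1}$. This is exactly what admissibility (Definition~\ref{def:admissible}) gives: for $V^2\subseteq T^2 = \rho(T^1)$ and $A\in\mc{H}^2_{V^2}$, the function $\kappa_f(\cdot,A)=\bs{1}_{f^{-1}(A)}$ must be $\mc{H}^1_{\rho^{-1}(V^2)}=\mc{H}^1_{V^1}$-measurable, which forces $f^{-1}(A)\in\mc{H}^1_{V^1}$. Combining the three pieces — surjectivity of $f$ to lift the witnessing point, interventional and distributional consistency to carry the inequality across the transformation, and admissibility to keep the witnessing event in the correct sub-$\sigma$-algebra — completes the proof. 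No step is technically hard; the only mild subtlety is checking that the surjectivity of both $f$ and $\rho$ are what make the lift possible, which is why the result is stated for \emph{perfect} abstractions rather than general causal transformations.
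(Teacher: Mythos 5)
Your proof is correct and follows essentially the same route as the paper's: lift the witnessing point $\omega$ via surjectivity of $f$, pull back the witnessing event to $f^{-1}(A)$, and transport the inequality using interventional consistency \eqref{eq:int_consistent_f} together with $f_\ast\P^1=\P^2$. Your explicit check that $f^{-1}(A)\in\mc{H}^1_{V^1}$ via admissibility is a welcome bit of extra care that the paper leaves implicit.
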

	The proof is in Appendix~\ref{app:proofs3}.
	
	The reverse statements are not true, i.e., if there is no
	causal effect in the target there might be a causal effect in the domain and if there is an active causal effect in the domain this does not imply that ther is a causal effect in the target, which can be seen by considering a target space with only a single point.
	We can also study causal effects in the context of embedding transformations, as in Lemma~\ref{lem:scm}.
	Then we see directly  that active causal effects are preserved. On the other hand, it is straightforward to construct examples where there is no causal effect in a subsystem, but 
	there is a causal effect in a larger system. This can be achieved by a violation of faithfulness. 
	\begin{example}
		Consider the SCM
		\begin{alignat*}{2}
			X&=N_X,\\
			M&=N_X+N_M,\\
			Y&=M-X+N_Y.
		\end{alignat*}
		Then there is no causal effect from \(\sigma(X)\) to \(\sigma(Y)\) in the system \((X,Y)\) but there is a causal effect in the complete system.
	\end{example}
	Finally, we show that similar results can be established for sources. Indeed, perfect abstraction preserve sources in the following sense.
	\begin{restatable}[Perfect Abstraction and Sources]{lemma}{sources}\label{lem:sources}
		Let \((f,\rho):\mc{C}^1\to\mc{C}^2\) be a perfect abstraction. Consider two sets \(U^2,V^2\subset T^2\) and denote \(U^1=\rho^{-1}(U^2)\) and \(V^1=\rho^{-1}(V^2)\). Assume that \(\mc{H}^1_{U^1}\) is a local source of \(\mc{H}^1_{V^1}\) in \(\mc{C}^1\). Then \(\mc{H}^2_{U^2}\) is a local source of \(\mc{H}^2_{V^2}\) in \(\mc{C}^2\).
		
		In particular, this implies that if \(\mc{H}^1_{U^1}\) is a global source then \(\mc{H}^2_{U^2}\) also is a global source.
	\end{restatable}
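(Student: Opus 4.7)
The plan is to reduce the target statement on $\mc{C}^2$ to the hypothesis on $\mc{C}^1$ by pulling sets back through $f$. Fix $A\in\mc{H}^2_{V^2}$; to show that $\mc{H}^2_{U^2}$ is a local source of $\mc{H}^2_{V^2}$, I need to verify that for every $B\in\mc{H}^2_{U^2}$,
\[
\int_B K^2_{U^2}(\omega,A)\,\P^2(\d\omega)=\P^2(B\cap A).
\]
Since $\rho$ is surjective (perfect abstractions are in particular abstractions), $\rho(T^1)=T^2$, so interventional consistency applies to arbitrary $A\in\mc{H}^2$.

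First, I would use admissibility of $(f,\rho)$: $f^{-1}(A)\in\mc{H}^1_{\rho^{-1}(V^2)}=\mc{H}^1_{V^1}$ and $f^{-1}(B)\in\mc{H}^1_{U^1}$. Applying the source hypothesis for $\mc{C}^1$ to the event $f^{-1}(A)\in\mc{H}^1_{V^1}$ and integrating against the set $f^{-1}(B)\in\mc{H}^1_{U^1}$ gives
\[
\int_{f^{-1}(B)} K^1_{U^1}(\omega,f^{-1}(A))\,\P^1(\d\omega)=\P^1\bigl(f^{-1}(B)\cap f^{-1}(A)\bigr).
\]
Next I would use distributional consistency $f_\ast\P^1=\P^2$ to rewrite the right-hand side as $\P^2(B\cap A)$, and use the deterministic form of interventional consistency \eqref{eq:int_consistent_f}, $K^1_{U^1}(\omega,f^{-1}(A))=K^2_{U^2}(f(\omega),A)$, inside the integrand on the left. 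A final change-of-variables under $f_\ast\P^1=\P^2$ turns the left-hand side into $\int_B K^2_{U^2}(\omega',A)\,\P^2(\d\omega')$, completing the identification.

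The ``in particular'' statement about global sources is immediate: take $V^2=T^2$ (so $V^1=T^1$ and $\mc{H}^1_{V^1}=\mc{H}^1$), and apply the first part. I do not expect any substantive obstacle here; the argument is essentially a bookkeeping exercise, with the only mildly delicate point being the verification that the measurability of $B$ and $A$ transfers correctly through $f^{-1}$, which is exactly what admissibility of $(f,\rho)$ provides. No additional measurability assumption on $K^1$ (as in Lemma~\ref{lem:ex_uniq_pushforward} or Lemma~\ref{lem:intervention_spaces}) is needed, because we only use $K^1_{U^1}(\cdot,f^{-1}(A))$ after composing with $f$ via interventional consistency.
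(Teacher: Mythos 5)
Your proposal is correct and follows essentially the same route as the paper's proof: both reduce the defining integral identity for the conditional probability on $\mc{C}^2$ to the source hypothesis on $\mc{C}^1$ via $f^{-1}(A)\in\mc{H}^1_{V^1}$, $f^{-1}(B)\in\mc{H}^1_{U^1}$, the deterministic interventional consistency $K^1_{U^1}(\omega,f^{-1}(A))=K^2_{U^2}(f(\omega),A)$, and the change of variables $f_\ast\P^1=\P^2$. The only difference is presentational (you push forward from the $\mc{C}^1$ identity, while the paper pulls back the $\mc{C}^2$ integrals in one chain), and your observation that no extra measurability assumption on $K^1$ is needed matches the paper.
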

	The proof is in Appendix~\ref{app:proofs3}.
	
	Similar to our results for causal effects, the existence of sources in the abstracted space does not guarantee the existence of sources in the domain space. Note that local sources are preserved in the setting of Lemma~\ref{lem:scm}. On the other hand, global sources are clearly not preserved, as we can add a global source to the system.
	
	\section{Conclusion}\label{sec:conclusion}
	In this paper, we developed the theory of causal spaces initiated by \citet{park2023measure} by proposing the notions of products of causal spaces and transformations of causal spaces. They are defined via natural extensions of the notions of products and probability kernels in probability theory. Not only are they mathematically elegant objects, but they have natural and important semantic interpretations as causal independence and abstraction or inclusion of causal spaces. Moreover, we explore the connections of these notions to those of causal effects and sources introduced in \citep{park2023measure}. 
	
	Despite the beauty and practical usefulness of the structural causal model and potential outcomes frameworks, we believe that the theory of causal spaces has the potential to overcome some of the longstanding limitations of them in terms of rigour and expressiveness, and the contribution of this paper is to develop the theory further in terms of treating multiple causal spaces through products and transformations rather than focusing the investigation to single causal spaces.
	
	Although probability theory does not seem to be so amenable to a category-theoretic treatment as other mathematical objects, there have been some efforts to do so \citep{lynn2010categories,adachi2016category,cho2019disintegration,fritz2020synthetic}. As future work, it would be interesting to explore extensions of the transformations proposed here to formal category-theoretic morphisms between causal spaces. 
	
	% \begin{contributions} % will be removed in pdf for initial submission 
		% 					  % (without ‘accepted’ option in \documentclass)
		%                       % so you can already fill it to test with the
		%                       % ‘accepted’ class option
		%     Briefly list author contributions. 
		%     This is a nice way of making clear who did what and to give proper credit.
		%     This section is optional.
		
		%     H.~Q.~Bovik conceived the idea and wrote the paper.
		%     Coauthor One created the code.
		%     Coauthor Two created the figures.
		% \end{contributions}
	
	% \begin{acknowledgements} % will be removed in pdf for initial submission,
		% 						 % (without ‘accepted’ option in \documentclass)
		%                          % so you can already fill it to test with the
		%                          % ‘accepted’ class option
		%     Briefly acknowledge people and organizations here.
		
		%     \emph{All} acknowledgements go in this section.
		% \end{acknowledgements}
	
	\begin{acknowledgements} % will be removed in pdf for initial submission,
		% (without ‘accepted’ option in \documentclass)
		% so you can already fill it to test with the
		% ‘accepted’ class option
		This work was supported by the Tübingen AI Center.
	\end{acknowledgements}
	% References
	\bibliography{ref}
	
	\newpage
	
	\onecolumn
	
	\title{Products, Abstractions, and Inclusions of Causal Spaces\\(Supplementary Material)}
	\maketitle
	\def\thefootnote{*}\footnotetext{Equal Contribution}\def\thefootnote{\arabic{footnote}}
	\appendix
	
	In the supplementary material, we provide the missing proofs for the results in our paper, and we recall some definitions from probability theory.
	\section{Background on Probability Theory}\label{app:background}
	Let us collect some definitions and notations from probability theory. Recall that a probability space \((\Omega,\mathscr{H},\mathbb{P})\) is defined through the following axioms:
	\begin{enumerate}[(i)]
		\item \(\Omega\) is a set; 
		\item \(\mathscr{H}\) is a collection of subsets of \(\Omega\), called \textit{events}, such that 
		\begin{itemize}
			\item \(\Omega\in\mathscr{H}\);
			\item if \(A\in\mathscr{H}\), then \(\Omega\setminus A\in\mathscr{H}\);
			\item if \(A_1,A_2,...\in\mathscr{H}\), then \(\cup_nA_n\in\mathscr{H}\);
		\end{itemize}
		\item \(\mathbb{P}\) is a probability measure on \((\Omega,\mathscr{H})\), i.e. a function \(\mathbb{P}:\mathscr{H}\rightarrow[0,1]\) satisfying
		\begin{itemize}
			\item \(\mathbb{P}(\emptyset)=0\);
			\item \(\mathbb{P}(\cup_nA_n)=\sum_n\mathbb{P}(A_n)\) for any disjoint sequence \(A_n\) in \(\mathscr{H}\);
			\item \(\mathbb{P}(\Omega)=1\).
		\end{itemize}
	\end{enumerate}
	Probability kernels (sometimes called Markov kernels) from \((\Omega^1,\mc{H}^1)\) to \((\Omega^2,\mc{H}^2)\) are maps \(\kappa:\Omega^1\times\mc{H}^2\to[0,1]\) such that
	\begin{enumerate}[(i)]
		\item For every \(A\in \mc{H}^2\) the function
		\[\omega\to\kappa(\omega,A)\]
		is measurable with respect to \(\mc{H}^1\). 
		\item For every \(\omega\in \Omega^1\) the function
		\[A\to\kappa(\omega,A)\]
		defines a measure on \((\Omega^2,\mc{H}^2)\).
	\end{enumerate}
	See, for example, \citep[p.37, Section I.6]{cinlar2011probability} for details. 
	
	We remark that we can concatenate probability kernels and we can consider product kernels. 
	
	We denote integration with respect to a (probability) measure \(\P\) by
	\[\int\P(\d\omega)T(\omega)\]
	for a measurable map \(T:(\Omega,\mc{H})\to(\R,\mc{B}(\R))\).
	
	For a measurable map \(f:(\Omega^1,\mc{H}^1)\to(\Omega^2,\mc{H}^2)\) we define the pushforward measure \(f_\ast\P\) by \(f_\ast\P(A)=\P(f^{-1}(A))\). The transformation formula for the pushforward measure reads
	\[\int f_\ast\P(\d\omega)T(\omega)=\int\P(\d\omega')T\circ f(\omega').\]
	Finally we recall the Factorisation lemma \citep[p.76, Theorem II.4.4]{cinlar2011probability}.
	\begin{lemma}[Factorisation Lemma]\label{lem:factor}
		Let $T:\Omega^1\to (\Omega^2, \mc{H}^2)$ be  a function.\
		A function $f:\Omega^1\to \R$ is $\sigma(T)$-$\mc{B}(\R)$ measurable
		if and only if there is a measurable function $g:(\Omega^2,\mc{H}^2)\to (\R,\mc{B}(\R))$ such that $f = g\circ T$.  
	\end{lemma}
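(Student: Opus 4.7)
The plan is to prove the two implications separately. The reverse direction is immediate from the measurability of compositions: if $f = g\circ T$ with $g$ measurable, then for every Borel set $B\subset \R$, $f^{-1}(B) = T^{-1}(g^{-1}(B))$ lies in $\sigma(T)$ by definition of the $\sigma$-algebra generated by $T$. So the substance is the forward direction, which I would handle by the standard measure-theoretic induction, constructing $g$ progressively through four classes of $\sigma(T)$-measurable functions.

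First, for an indicator $f = \1_A$ with $A\in \sigma(T)$, the definition of $\sigma(T)$ supplies some $B\in \mc{H}^2$ with $A = T^{-1}(B)$, and then $g = \1_B$ factors $f$. Next, for a $\sigma(T)$-measurable simple function $f = \sum_{i=1}^n a_i \1_{A_i}$, picking $B_i\in \mc{H}^2$ with $A_i = T^{-1}(B_i)$ and setting $g = \sum_{i=1}^n a_i \1_{B_i}$ works by linearity. For a non-negative $\sigma(T)$-measurable $f$, I would approximate from below by an increasing sequence of non-negative simple $\sigma(T)$-measurable $f_n\uparrow f$ via the standard dyadic construction, factor each as $f_n = g_n\circ T$ by the previous step, and then define $g(y) := \limsup_n g_n(y)$. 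This $g$ is $\mc{H}^2$-measurable into $[0,\infty]$ (since $\limsup$ preserves measurability), and satisfies $g(T(\omega)) = \limsup_n g_n(T(\omega)) = \limsup_n f_n(\omega) = f(\omega)$ for every $\omega\in \Omega^1$. Finally, for a general real-valued measurable $f$, decompose $f = f^+ - f^-$ and apply the non-negative case to each part separately.

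The main subtlety lies in the third step: the sequence $g_n$ need not converge at points outside the image $T(\Omega^1)$, so the naive pointwise limit may not be well-defined there. Using $\limsup$ sidesteps this and produces a measurable function in every case, but one should then argue that $g$ can be modified on $\{g = \infty\}$ without disturbing the identity $f = g\circ T$, since that set is disjoint from $T(\Omega^1)$ whenever $f$ is real-valued; replacing $g$ by $g\cdot \1_{\{g<\infty\}}$ yields a real-valued measurable function with $g\circ T = f$. Everything else is routine bookkeeping that transfers directly from the usual measure-theoretic machinery.
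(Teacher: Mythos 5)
Your proof is correct and complete; in particular, you have correctly identified and handled the one genuine subtlety (the behaviour of the approximating sequence $g_n$ off the image $T(\Omega^1)$, resolved via $\limsup$ and truncation of the set where the limit is infinite). Note that the paper does not prove this lemma at all---it is recalled as a standard result with a citation to \citet{cinlar2011probability}---and your argument is exactly the standard measure-theoretic induction (indicators, simple functions, monotone limits, positive/negative parts) given in that reference, so there is nothing further to reconcile.
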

	
	\section{Proofs}\label{app:proofs}
	In this appendix we collect  the proofs of all the results in the paper. 
	\subsection{Proofs for Section~\ref{sec:product_spaces}}\label{app:proofs1}
	\product*
	\begin{proof}[Proof of Lemma \ref{lem:product_space}]
		It is a standard fact that \(\K^1\otimes\K^2\) defines a family of probability kernels\footnote{See, e.g. math.stackexchange.com/questions/84078/product-of-two-probability-kernel-is-a-probability-kernel}. For the first axiom of causal kernels (Definition \ref{def:causal_space}(i)), we observe that 
		\begin{alignat*}{2}
			(K^1\otimes K^2)_{\emptyset}((\omega_1,\omega_2),A_1\times A_2)&=K^1_{\emptyset} (\omega_1, A_1)K^2_{\emptyset} (\omega_2,A_2)\\
			&=\P^1(A_1)\P^2(A_2)\\
			&=\P^1\otimes\P^2(A_1\times A_2).
		\end{alignat*}
		By standard reasoning based on the monotone class theorem, this extends to $A\in \mc{H}^1\otimes \mc{H}^2$ and therefore
		the first axiom of causal spaces is satisfied.
		For the second axiom of causal spaces, for any \(S=S^1\cup S^2\), first fix arbitrary \(A_1\in\mathscr{H}^1_{S^1}\) and \(A_2\in\mc{H}^1_{S^2}\). Then, for all \(B_1\in\mathscr{H}^1\) and \(B_2\in\mathscr{H}^2\), we find that for all \(\omega=(\omega_1,\omega_2)\),
		\begin{alignat*}{2}
			L_S(\omega,(A_1\times A_2)\cap(B_1\times B_2))&=K^1_{S_1}(\omega_1,A_1\cap B_1)K^2_{S^2}(\omega_2,A_2\cap B_2)\\
			&=\mathbf{1}_{A_1}(\omega_1)K^1_{S^1}(\omega_1,B_1)\mathbf{1}_{A_2}(\omega_2)K^2_{S^2}(\omega_2,B_2)\\
			&=\mathbf{1}_{A_1\times A_2}(\omega)L_S(\omega,B_1\times B_2).
		\end{alignat*}
		Hence, for this fixed pair \(A_1\), \(A_2\) and this \(\omega\), the measures \(B\mapsto L_S(\omega,(A_1\times A_2)\cap B)\) and \(B\mapsto\mathbf{1}_{A_1\times A_2}(\omega)L_S(\omega,B)\) are identical on the generating rectangles \(B_1\times B_2\), hence they are identical on all of \(\mathscr{H}^1\otimes\mathscr{H}^2\) by the standard monotone class theorem reasoning. Now, since this is true for arbitrary rectangles \(A_1\times A_2\)  with \(A_1\in\mathscr{H}^1_{S^1}\) and \(A_2\in\mathscr{H}^2_{S^2}\), if we now fix \(B\in\mathscr{H}^1\otimes\mathscr{H}^2\), we have that the two measures \(A\mapsto L_S(\omega,A\cap B)\) and \(A\mapsto\mathbf{1}_A(\omega)L_S(\omega,B)\) on \(\mathscr{H}^1_{S^1}\otimes\mathscr{H}^2_{S^2}\) are identical on the generating rectangles \(A_1\times A_2\), hence they are identical on all of \(\mathscr{H}^1_{S^1}\otimes\mathscr{H}^2_{S^2}\). Now both \(A\) and \(B\) are arbitrary elements of \(\mathscr{H}^1\otimes\mathscr{H}^2\) and \(\mathscr{H}^1_{S^1}\otimes\mathscr{H}^2_{S^2}\) respectively. To conclude, we have, for all \(\omega\), \(A\in\mc{H}^1\otimes\mc{H}^2\) and \(B\in\mathscr{H}^1_{S^1}\otimes\mathscr{H}^2_{S^2}\), 
		\[L_S(\omega,A\cap B)=\mathbf{1}_A(\omega)L_S(\omega,B),\]
		confirming the second axiom of causal spaces. 
	\end{proof}
	
	\productcausaleffect*
	\begin{proof}[Proof of Lemma \ref{lem:product_no_causal_effect}]
		\begin{enumerate}[(i)]
			%		\snote{I was not completely happy with this proof because
				%			we should make clear which causal mechanism we use (product or original and $A=E_T\times A$ is also a bit too abusive of notation. Should be a bit more explicit. Please check}
			\item Denote the causal kernels on the product space by $K^p$. Take any event \(A\in\mathscr{H}_{T^2}\), and any \(S\subseteq T^1\cup T^2\). Note that \(S\) can be written as a union \(S=S^1\cup S^2\) for some \(S^1\subseteq T^1\) and \(S^2\subseteq T^2\). Then see that, by writing \(A=\Omega^1\times A'\in\mathscr{H}_{T^1}\otimes\mathscr{H}_{T^2}\)
			with $A'\subseteq\Omega^2$,
			\begin{alignat*}{2}
				K_S^p(\omega,A)&=K^p_{S^1\cup S^2}(\omega,A)\\
				&=K_{S^1}^1(\omega,\Omega_1)K_{S^2}^2(\omega,A')\\
				&=K_{\emptyset}^1(\omega,\Omega_1)K_{S^2}(\omega,A')\\
				&=K_{S\setminus T^1}(\omega,A).
			\end{alignat*}
			Here we used that $K_{S^1}^1(\omega,\Omega_1)=1=K_{\emptyset}^1(\omega,\Omega_1)$ because $K(\omega,\cdot)$ is a probability measure for a probability kernel.
			So \(\mathscr{H}_{T^1}\) has no causal effect on \(A\). Implication in the other direction follows the same argument. 
			\item Take any \(A\in\mathscr{H}_{T^2}\). By (i), \(\mathscr{H}_{T^1}\) has no causal effect on \(A\), so
			\[K_{T^1}(\omega,A)=K_{T^1\setminus T^1}(\omega,A)=K_\emptyset(\omega,A)=\mathbb{P}(A).\]
			But since \(\mathscr{H}_{T^1}\) and \(\mathscr{H}_{T^2}\) are probabilistically independent, \(\mathbb{P}_{T^1}(A)=\mathbb{P}(A)\). Hence, \(\mathbb{P}_{T^1}(A)=K_{T^1}(\omega,A)\), meaning \(\mathscr{H}_{T_1}\) is a source of \(A\). Since \(A\in\mathscr{H}_{T_2}\) was arbitrary, \(\mathscr{H}_{T_1}\) is a source of \(\mathscr{H}_{T_2}\). The implication in the other direction follows the same argument. 
		\end{enumerate}
	\end{proof}
	
	\subsection{Proofs for Section~\ref{subsec:exuniqmaps}}\label{app:proofs2}
	\composition*
	\begin{proof}[Proof of Lemma \ref{lem:composition}]
		First, we claim that the pair \((\kappa_3,\rho_3)=(\kappa_1\circ\kappa_2,\rho_1\circ\rho_2)\) is admissible. We have to show that, for any \(S^3\subset\rho_3(T^1)\) and \(A\in\mc{H}^3_{S^3}\), the map \(\kappa_3(\cdot,A)\) is measurable with respect to \(\mc{H}^1_{\rho_3^{-1}(S^3)}\).
		
		Let us call \(\rho_2^{-1}(S^3)=S^2\). Note that, since \((\kappa_2,\rho_2):\mc{C}^2\to\mc{C}^3\) is a causal transformation, \(\kappa_2(\cdot,A)\) is measurable with respect to \(\mc{H}^2_{S^2}\).
		Since we assume that the first map is an abstraction, we find that \(S^2\subset \rho^1(T^1)=T^2\), and thus by Definition~\ref{def:admissible} that for \(B\in\mc{H}_{S^2}^2\) the function \(\kappa_1(\cdot,B)\) is measurable with respect to \(\mc{H}^1_{\rho^{-1}_3(S^3)}\), where we used \(\rho^{-1}_3(S^3)=\rho_1^{-1}(S^2)\). We now use the relation \(\kappa_3(\omega,A)=\int \kappa_1(\omega,\d\omega')\kappa_2(\omega',A)\). Since \(\kappa_2(\cdot,A)\) is measurable with respect to \(\mc{H}^2_{S^2}\), we conclude that we can approximate \(\kappa_2(\cdot,A)\) by a simple function \(\sum\alpha_i\bs{1}_{B_i}(\cdot)\) with \(B_i\in\mathscr{H}^2_{S^2}\). But for such a simple function, we find
		\[\int\kappa_1(\omega,\d\omega')\sum_i\alpha_i\bs{1}_{B_i}(\omega')=\sum_i\alpha_i\kappa_1(\omega,B_i),\]
		which is measurable with respect to \(\mc{H}^1_{S^1}\) as a sum of measurable functions because \((\kappa_1,\rho_1)\) is admissible. By passing to the limit \((\kappa_3 ,\rho_3)\) is admissible.
		
		Next we show that distributional consistency holds, which follows directly from distributional consistency of \((\kappa_1,\rho_1)\) and \((\kappa_2,\rho_2)\):
		\begin{alignat*}{2}
			\int\P^1(\d\omega)\kappa_3(\omega,A)&=\int\P^1(\d\omega)\kappa_1(\omega,\d\omega_2)\kappa_2(\omega_2,A)\\
			&=\int\P^2(\d\omega_2)\kappa_2(\omega_2,A)\\
			&=\P^3(A).
		\end{alignat*}
		Next we consider interventional consistency. Let \(S^3\subset\rho_3(T^1)\) and define \(S^2=\rho_2^{-1}(S^3)\) and \(S^1=\rho^{-1}_3(S^1)=\rho_1^{-1}(S^2)\). Note that, since \((\kappa_1,\rho_1)\) is an abstraction, i.e., \(\rho_1\) is surjective, we have \(S^2\subset\rho_1(T^1)=T^2\). Now we find that, for \(\omega_1\in\Omega^1\) and \(A\in\mc{H}^3\),
		\begin{alignat*}{2}
			\int\kappa_3(\omega,\d\omega')K_{S_3}^3(\omega',A)&=\int\kappa_1(\omega,\d \omega_2)\kappa_2(\omega_2,\d\omega')K_{S_3}^3(\omega',A)\\
			&=\int\kappa_1(\omega,\d\omega_2)K_{S_2}^2(\omega_2,\d\omega')\kappa_2(\omega',A)\\
			&=\int K_{S_1}^1(\omega,\d \omega')\kappa_1(\omega',\d\omega_2)\kappa_2(\omega_2,A)\\
			&=\int K_{S_1}^1(\omega,\d\omega')\kappa_3(\omega',A).
		\end{alignat*}
		This ends the proof as we have shown that $(\kappa_3, \rho_3)$ is a causal transformation.
	\end{proof}
	\scm*
	\begin{proof}[Proof of Lemma~\ref{lem:scm}]
		First we note that as in Example~\ref{ex:sub} it is clear that \((\kappa,\rho)\) is admissible and 
		\[\int\kappa(x_S,A)\P^S(d x_S)=\int\P_{\mathscr{H}^1}(A)d\P^S=\P(A),\]
		so we have distributional consistency. 
		
		For interventional consistency, let \(A\in\mathscr{H}^1\), \(S'\subseteq S\) and \(\omega\in\Omega^1\) be arbitrary. Then see that
		\begin{alignat*}{3}
			\int K_{S'}(\omega,d\omega')\kappa(\omega',A)&=\int K_{S'}(\omega,d\omega')\mathbb{P}_{\mathscr{H}^1}(\omega',A)\\
			&=\int K_{S'}(\omega,d\omega')\mathbf{1}_A(\omega')&&\text{since }A\in\mathscr{H}^1\\
			&=K_{S'}(\omega,A).
		\end{alignat*}
		On the other hand, see that
		\begin{alignat*}{3}
			\int\kappa(\omega,d\omega')L_{S'}(\omega',A)&=\int\mathbb{P}_{\mathscr{H}^1}(\omega,d\omega')L_{S'}(\omega',A)\\
			&=\int\mathbf{1}_{d\omega'}(\omega)L_{S'}(\omega',A)&&\text{since }L_{S'}(\cdot,A)\text{ is measurable with respect to }\mathscr{H}^1\\
			&=L_{S'}(\omega,A).
		\end{alignat*}
		But by the marginalisation condition on the causal mechanisms \(\K\) and \(\L\), we have that \(L_{S'}(\omega,A)=K_{S'}(\omega,A)\) for all \(\omega\in\Omega^1\). This proves interventional consistency.

		% Regarding interventional consistency, we start by denoting the parents of each \(X_i\) as \(\mathrm{PA}_i\), and the index set of \(\mathrm{PA}_i\) as \(\mathrm{pa}(i)\). We observe first that for \(S'\subset S\), we have
		% \[L_{S'}(x_S,\d x_S')=L_{S'}(x_{S'},\d x_S')=\delta_{x_{S'}}\prod_{i\in[d]\setminus S'}P_{X_i\mid\mathrm{PA}_i=\pi_{S\mathrm{pa}_i}(x_S)}(\d x_{i})\]
		% and we obtain the corresponding kernel \(K_{S'}\) by marginalisation
		% \[K_{S'}(x_S,\d x_S')=K_{S'}(x_{S'},\d x_S')=\mar_{r\in R}L_{S'}(x_{S'},\d x_S').\]
		% Now we consider $A\in \mc{B}(\R^{[d]})_S$. 
		% Note that this implies that we can write  $A=\{x: x_S\in \tilde{A}\}$.
		% The definition of $\kappa$ implies that $ \kappa(x_S,A)=\1_{\tilde{A}}(x_S)$ and so we find
		% \begin{align}
			%    \int  K_{S'}(x_S, \d x_S') \kappa(x_S',A)
			%    = K_{S'}(x_S, \tilde{A}).
			% \end{align}
		% On the other hand, we have
		% \begin{align}
			% \begin{split}
				%     \int \kappa(x_S, \d x') L_{S'}( x', A)
				%   &  = \int \left(\mar_{S'^c} \kappa(x_S, \d x')\right) L_{S'}(x'_{S'},A)
				%   \\
				%   &=\int \delta_{x_{S'}}(\d x'_{S'}) L_{S'}(x'_{S'},A)
				%    \\
				%    &=L_{S'}(x_{S'},A)=\int \mar_{S^c}L_{S'}(x_{S'},\d x') \1_{\tilde{A}}(x'_S)
				%    \\
				%    &=K_{S'}(x_S, \tilde{A}).
				%    \end{split}
			% \end{align}
		% This ends the proof.
	\end{proof}
	\rigidity*
	\begin{proof}[Proof of Lemma~\ref{lem:rigidity}]
		Applying distributional consistency of $\p$
		and $\tilde{\p}$,
		we find, for all $A\in \mc{H}^2$,
		\begin{align}
			\P^2(A)=\int \P^1(\d\omega)\kappa(\omega, A)
			=\tilde{P}^2(A)
		\end{align}
		and thus $\P^2=\tilde{\P}^2$. 
		Next, we consider $A\in \mc{H}^2_{\rho(T^1)}$
		and $S\subset \rho(T^1)$. 
		Let us define 
		\begin{align}
			B = \{\omega\in \Omega^2: K^2_S(\omega,A)<\tilde{K}_S^2(\omega,A)\}.
		\end{align}
		Since $K^2_S(\cdot, A)$ and $\tilde{K}^2_S(\cdot,A)$
		are $\mc{H}^2_S$ measurable we find $B\in \mc{H}^2_S\subset
		\mc{H}^2_{\rho(T^1)}$.
		Then the definition of causal spaces (see Definition~\ref{def:causal_space}) implies that
		\begin{align}
			K^2_S(\omega', A\cap B)=\bs{1}_B(\omega')K^2_S(\omega', A).
		\end{align}
		Note that $A\cap B\in \mc{H}^2_{\rho(T^1)}$
		so we can apply interventional consistency \eqref{eq:interventional_consistency}
		for $\mc{C}^2$ and $\tilde{\mc{C}}^2$
		and obtain for any $\omega$
		\begin{align}
			\begin{split}
				\int  \kappa(\omega, \d \omega')\bs{1}_B(\omega') K^2_S(\omega', A)
				&=
				\int  \kappa(\omega, \d \omega')\bs{1}_B(\omega') K^2_S(\omega', A\cap B)
				=\int K^1_{\rho^{-1}(S)}(\omega,\d\omega')\kappa(\omega',A)
				\\
				&= \int  \kappa(\omega, \d \omega')\bs{1}_B(\omega') \tilde{K}^2_S(\omega', A\cap B)
				= \int  \kappa(\omega, \d \omega')\bs{1}_B(\omega') \tilde{K}^2_S(\omega', A)
			\end{split}
		\end{align}
		We integrate this relation with respect to $\P^1(\d \omega)$ 
		and then apply distributional consistency and get
		\begin{align}
			\begin{split}
				0 &=  \int \P^1(\d \omega) \kappa(\omega, \d \omega') 
				\bs{1}_B(\omega') (\tilde{K}^2_S(\omega', A)-K^2_S(\omega', A))
				\\
				&=\int \P^2(\d \omega')  
				\bs{1}_B(\omega') (\tilde{K}^2_S(\omega', A)-K^2_S(\omega', A))
				\\
				&=\int_B \P^2(\d\omega')\,(\tilde{K}^2_S(\omega', A)-K^2_S(\omega', A)).
			\end{split}
		\end{align}
		On $B$ the last term is strictly positive by definition. Thus we conclude that $\P^2(B)=0$ and thus $\tilde{K}^2_S(\omega', A)\leq {K}^2_S(\omega', A)$ holds almost surely. 
		The same reasoning implies the reverse bound and we conclude that $\P^2$ almost surely the relation 
		\begin{align}
			\tilde{K}^2_S(\omega', A)={K}^2_S(\omega', A)
		\end{align}
		holds.
	\end{proof}
	% \begin{remark}
		% 	The result is in general not tight because we can actually even conclude that $\tilde{K}^2_S(\omega', A)={K}^2_S(\omega', A)$
		% 	holds for $\kappa(\omega, \cdot)$ every $\omega'$ which is more restrictive.
		% \end{remark}
	
	\pushforward*
	\begin{proof}[Proof of Lemma~\ref{lem:ex_uniq_pushforward}]
		We first prove uniqueness. The relation \(f_\ast\P^1=\P^2\) that are necessarily true for deterministic maps (see Section \ref{sec:transformations}) implies that \(\P^2\) is predetermined. 
		Moreover, we find that, by (\ref{eq:int_consistent_f}), for any \(A\in\mc{H}^2\), \(S\subset T^2\) and any \(\omega\in\Omega^1\),
		\[K^1_{\rho^{-1}(S)}(\omega,f^{-1}(A))=K^2_S(f(\omega),A).\]
		But since \(f\) is surjective we conclude that due to interventional consistency \(K^2_S(\omega',A)\) for \(\omega'\in\Omega^2\)
		is unique.
		
		To prove the existence we note that by assumption for fixed \(A\in\mc{H}^2\) the function \(K^1_{\rho^{-1}(S^2)}(\cdot,f^{-1}(A))\) is measurable with respect to \(f^{-1}(\mc{H}^2_{S^2})\). Now by the Factorisation Lemma 
		(see Lemma~\ref{lem:factor} in Appendix~\ref{app:background}) there is a
		measurable function \(g:(\Omega^2, \mc{H}^2_{S^2})\to\R\)
		such that 
		\[K^1_{\rho^{-1}(S^2)}(\omega,f^{-1}(A))=g\circ f(\omega).\]
		We define $K^2_{S^2}(\omega', A)=g(\omega')$. By surjectivity this defines
		\(K^2_{S^2}\) everywhere and this defines a probability kernel because \(g\) is measurable.
		
		It remains to verify that the resulting \(\mc{C}^2\) is indeed a causal space. Using interventional and distributional consistency we obtain
		\begin{alignat*}{2}
			K^2_\emptyset(f(\omega),A)&=K^1_\emptyset(\omega,f^{-1}(A))\\
			&=\P^1(f^{-1}(A))\\
			&=f_\ast\P^1(A)\\
			&=\P^2(A).
		\end{alignat*}
		This verifies the first property of causal spaces. For the second property we observe that for $A\in \mc{H}^2_{S^2}$, $S^1=\pi^{-1}(S^2)$ using causal consistency
		\begin{alignat*}{2}
			K^2_{S^2}(f(\omega),A\cap B)&=K^1_{S^1}(\omega,f^{-1}(A\cap B))\\
			&=K^1_{S^1}(\omega,f^{-1}(A)\cap f^{-1}(B))\\
			&=\1_{f^{-1}(A)}(\omega)K^1_{S^1}(\omega,f^{-1}(B))\\
			&=\1_{A}(f(\omega))K^2_{S^2}(f(\omega),B).
		\end{alignat*}
		Here we used that \(\mc{C}^1\) is a causal space and \(f^{-1}(A)\in\mc{H}^1_{S^1}\). Thus, we conclude that we obtained a causal space \(\mc{C}^2\).
	\end{proof}
	\intervention*
	\begin{proof}[Proof of Lemma~\ref{lem:intervention_spaces}]
		First, we note that by Lemma~\ref{lem:ex_uniq_pushforward} \(\L^2\) exists and is unique. Thus, we need to verify distributional consistency and interventional consistency. 
		
		Let us first show \(f_\ast (\P^1)^{\text{do}(U^1,\Q^1)}=(\P^2)^{\text{do}(U^2,\Q^2)}\). Since \((f,\rho)\) is a causal transformation (i.e., interventional consistency as in \eqref{eq:interventional_consistency} holds), we find that, for \(A\in\mc{H}^2\), 
		\begin{alignat*}{2}
			f_\ast(\P^1)^{\text{do}(U^1,\Q^1)}(A)&=\int\Q^1(\d\omega)K_{U^1}^1(\omega,f^{-1}(A))\\
			&=\int\Q^1(\d\omega)K_{U^2}^2(f(\omega),A)\\
			&=\int(f_\ast\Q^1)(\d\omega')K_{U^2}^2(\omega',A)\\
			&=\int\Q^2(\d\omega')K_{U^2}^2(\omega',A)\\
			&=(\P^2)^{\text{do}(U^2,\Q^2)}(A).
		\end{alignat*}
		Here we used the change of variable for pushforward-measures.
		
		Next, we show interventional consistency of \((f,\rho):\mathscr{C}^1_I\to\mathscr{C}^2_I\). For this, we introduce the shorthand \(f_S=\pi_S\circ f\). Note that since \(f_S\) is measurable with respect to \(\mc{H}^1_{\rho^{-1}(S)}\) we can find \(\tilde{f}_S\) such that \(f_S(\omega)=\tilde{f}_{S}(\omega_{\rho^{-1}(S)})\). Note that, by the interventional consistency of \((f,\rho):\mathscr{C}^1\to\mc{C}^2\), we have
		\[K_{\rho^{-1}(S)}^1(\omega,f^{-1}(S))=K^2_S(f(\omega),A)=K^2_S(\tilde{f}_{S}(\omega_S),A).\]
		We can now show for \(A\in\mc{H}^2\) and \(S^1=\rho^{-1}(S^2)\) that
		\begin{alignat*}{2}
			(K^1)_{S^1}^{\do(U^1,\Q^1,\L^1)}(\omega,f^{-1}(A))&=\int L^1_{S^1\cap U^1}(\omega_{S^1\cap U^1},\d\omega'_{U^1})
			K^1_{S^1\cup U^1}((\omega_{S^1\setminus U^1},\omega'_{U^1}), f^{-1}(A))\\
			&=\int L^1_{S^1\cap U^1}(\omega_{S^1\cap U^1},\d\omega'_{U^1})K^2_{S^2\cup U^2}(\tilde{f}_{S^2\setminus U^2}(\omega_{S^1\setminus U^1}),\tilde{f}_{U^2}(\omega'_{U^1}),A)\\
			&=\int\left((\tilde{f}_{U^1})_\ast(L^1_{S^1\cap U^1}(\omega_{S^1\cap U^1},\cdot)\right)(\d \overline{\omega}_{U^2})K^2_{S^2\cup U^2}(\tilde{f}_{S^2\setminus U^2}(\omega_{S^1\setminus U^1}),\overline{\omega}_{U^2},A)\\
			&=\int L^2_{S^2\cap U^2}(f(\omega)_{S^2\cap U^2}),\d\overline{\omega}_{U^2})K^2_{S^2\cup U^2}(f(\omega)_{S^2\setminus U^2}),\overline{\omega}_{U^2},A)\\
			&=(K^2)_{S^2}^{\do(U^2,\Q^2,\L^2)}(f(\omega),A).
		\end{alignat*}
		This ends the proof.
	\end{proof}
	\subsection{Proofs for Section~\ref{subsec:causal_effect}}\label{app:proofs3}
	\nocausaleffect*
	\begin{proof}[Proof of Lemma~\ref{lem:no_causal_effect}]
		Consider \(A\in \mc{H}^2_{V^2}\) and any \(S^2\subset T^2\). Then for any \(\omega'\in\Omega^2\) we find an \(\omega\in \Omega^1\) such that \(f(\omega)=\omega'\). Using interventional consistency and \(f^{-1}(A)\in \mc{H}^1_{V^1}\) we conclude 
		\begin{alignat*}{2}
			K^2_{S^2}(\omega',A)&=K^1_{\rho^{-1}(S^2)}(\omega,f^{-1}(A))\\
			&=K^1_{\rho^{-1}(S^2)\setminus\rho^{-1}(U^2)}(\omega,f^{-1}(A))\\
			&=K^1_{\rho^{-1}(S^2\setminus U^2)}(\omega,f^{-1}(A))\\
			&=K^2_{S^2\setminus U^2}(\omega',A).
		\end{alignat*}
		This ends the proof.
	\end{proof}
	\activecausaleffect*
	\begin{proof}[Proof of Lemma~\ref{lem:active_effect}]
		Since \(\mc{H}^2_{U^2}\) has an active causal effect on \(\mc{H}^2_{V^2}\) in \(\mc{C}^2\), we find that there is an \(\omega'\in \Omega^2\) and an \(A\in \mc{H}^2_{V^1}\) such that
		\[K_{U^2}^2(\omega',A)\neq\P^2(A).\]
		By surjectivity there is \(\omega\in\Omega^1\) such that \(\omega'=f(\omega)\) and thus
		\begin{alignat*}{2}
			K^1_{U^1}(\omega,f^{-1}(A))&=K_{U^2}^2(\omega',A)\\
			&\neq\P^2(A)\\
			&=\P^1(f^{-1}(A)).
		\end{alignat*}
		The claim follows because \(f^{-1}(A)\in \mc{H}^1_{U^1}\).
	\end{proof}
	\sources*
	\begin{proof}[Proof of Theorem~\ref{lem:sources}]
		Our goal is to show that \(K_{U^2}^2(\cdot,A)\) is a version of the conditional probability \(\P^2_{\mathscr{H}^2_{U^2}}(A)\) for \(A\in\mc{H}^2_{V^2}\). It is sufficient to show that for all \(B\in\mc{H}_{U^2}^2\) the following relation holds
		\begin{equation}\label{eq:rel_to_prove}
			\int \P^2(\d \omega') \1_A(\omega')\1_B(\omega')
			= \int \P^2(\d \omega')\1_B(\omega') K_{U^2}^2(\omega', A).
		\end{equation}
		Using that \((f,\rho)\) is a perfect abstraction, \(f^{-1}(A)\in\mc{H}^1_{V^1}\), \(f^{-1}(B)\in\mc{H}^1_{U^1}\), and that \(\mc{H}^1_{U^1}\) is a local source of \(\mc{H}^1_{V^1}\) we find
		\begin{alignat*}{2}
			\int\P^2(\d\omega')\1_A(\omega')\1_B(\omega')&=\int f_\ast\P^1(\d\omega')\1_A(\omega')\1_B(\omega')\\
			&=\int\P^1(\d\omega)\1_A(f(\omega))\1_B(f(\omega))\\
			&=\int\P^1(\d\omega)\1_{f^{-1}(A)}(\omega)\1_{f^{-1}(B)}(\omega)\\
			&=\int\P^1(\d\omega)K^1_{U^1}(\omega,f^{-1}(A)) \1_{f^{-1}(B)}(\omega)\\
			&=\int\P^1(\d\omega)K^2_{U^2}(f(\omega),A)\1_{B}(f(\omega))\\
			&=\int f_\ast\P^1(\d\omega')K^2_{U^2}(\omega',A)\1_{B}(\omega')\\
			&=\int\P^2(\d\omega')K^2_{U^2}(\omega',A)\1_{B}(\omega').
		\end{alignat*}
		Thus we have shown that \eqref{eq:rel_to_prove} holds and the proof is completed.
	\end{proof}
	
	\section{Equivalence of causal mechanism definitions}
	\label{app:factorization}
	Let us here briefly comment on the notation and equivalence of the notions $K_S(\omega,A)$ to $K_S(\omega_S,A)$. 
	To clarify the equivalence let us recall the factorization lemma.
	\begin{lemma}[Factorization Lemma]
		Let $T:\Omega\to \Omega'$ be a map and $(\Omega', \mc{A}')$
		a measurable space. A function $f:\Omega\to [0,1]$
		is $\sigma(T)/\mc{B}([0,1])$ measurable if and only if $f=g\circ T$
		for some $\mc{A}'/\mc{B}([0,1])$ measurable function 
		$g:\Omega'\to[0,1]$.
	\end{lemma}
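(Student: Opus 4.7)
The plan is the standard ``measure-theoretic machine'' argument in two directions. The easy direction is that composition with $T$ preserves measurability: if $f = g \circ T$ for some $\mc{A}'/\mc{B}([0,1])$-measurable $g$, then for every Borel set $B \subset [0,1]$ we have $f^{-1}(B) = T^{-1}(g^{-1}(B))$, and since $g^{-1}(B) \in \mc{A}'$, we get $f^{-1}(B) \in \sigma(T)$ by the very definition of the generated $\sigma$-algebra. So $f$ is $\sigma(T)/\mc{B}([0,1])$ measurable.

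For the nontrivial direction I would build $g$ by approximation. First, for indicator functions: if $f = \bs{1}_A$ with $A \in \sigma(T)$, then by the definition of $\sigma(T)$ there exists $B \in \mc{A}'$ with $A = T^{-1}(B)$, so $\bs{1}_A = \bs{1}_B \circ T$ and we may take $g = \bs{1}_B$. By linearity, this extends to any $\sigma(T)$-measurable simple function $f = \sum_i \alpha_i \bs{1}_{A_i}$: writing $A_i = T^{-1}(B_i)$ with $B_i \in \mc{A}'$, the function $g = \sum_i \alpha_i \bs{1}_{B_i}$ is $\mc{A}'$-measurable and satisfies $f = g \circ T$.

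For a general $\sigma(T)$-measurable $f : \Omega \to [0,1]$, approximate pointwise by an increasing sequence of $\sigma(T)$-measurable simple functions $f_n \uparrow f$, and pick $g_n$ with $f_n = g_n \circ T$ as above. Define
\begin{equation*}
    g(\omega') = \begin{cases} \lim_n g_n(\omega') & \text{if the limit exists}, \\ 0 & \text{otherwise}. \end{cases}
\end{equation*}
The convergence set $C = \{\omega' : (g_n(\omega'))_n \text{ converges}\}$ lies in $\mc{A}'$ as a standard consequence of measurability of pointwise $\limsup$ and $\liminf$, so $g$ is $\mc{A}'$-measurable. For any $\omega \in \Omega$, the sequence $g_n(T(\omega)) = f_n(\omega)$ converges to $f(\omega) \in [0,1]$, hence $T(\omega) \in C$ and $g(T(\omega)) = f(\omega)$, giving $f = g \circ T$ as desired.

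The main subtlety is defining $g$ off the image $T(\Omega)$, which in general need not be measurable in $\Omega'$. The convergence-set trick sidesteps this entirely: we never need to identify $T(\Omega)$ as a measurable set, only the $\mc{A}'$-measurable set $C$ on which the approximating sequence converges, which automatically contains $T(\Omega)$. Restricting the target to $[0,1]$ (as stated in the lemma) also removes any need to treat positive and negative parts or issues at infinity, keeping the argument clean.
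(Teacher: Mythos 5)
The paper itself does not prove this lemma: it is quoted as a standard fact and attributed to \citep[p.76, Theorem II.4.4]{cinlar2011probability}, so there is no in-paper argument to compare yours against. Your proof is the standard Doob--Dynkin factorisation argument --- preimages for the easy direction, then indicators, simple functions, and monotone approximation combined with the convergence-set trick for the converse --- and it is essentially correct; in particular you rightly identify and handle the only real subtlety, namely defining $g$ off the (possibly non-measurable) image $T(\Omega)$. One small loose end: the functions $g_n$ produced by your simple-function step need not take values in $[0,1]$ away from $T(\Omega)$, because the sets $B_i$ with $T^{-1}(B_i)=A_i$ can overlap even when the $A_i$ are disjoint, so $\sum_i \alpha_i \mathbf{1}_{B_i}$ may exceed $1$ there; consequently your limit $g$ may leave $[0,1]$ on part of the convergence set, whereas the lemma asks for $g:\Omega'\to[0,1]$. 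Replacing $g$ by $(g\vee 0)\wedge 1$ at the very end --- which does not alter its values on $T(\Omega)$, where $g(T(\omega))=f(\omega)\in[0,1]$ --- restores the stated codomain. With that cosmetic fix the argument is complete.
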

	We apply this where for $T$ we use the projection $\pi_S:(\Omega,\mc{H})\to (\Omega_S,\mc{H}_S)$.
	Then, by the factorization lemma, a $\mc{H}_S$ measurable map $K_S(\cdot, A):\Omega\to [0,1]$
	gives rise to a map $K_S'(\cdot, A):\Omega_S\to [0,1]$
	such that 
	\begin{align}\label{eq:fact_lemma_applied}
		K_S(\omega,A)=K_S'(\pi_S\omega,A)=K_S'(\omega_S,A).
	\end{align}
	Moreover, this maps is unique because $\pi_S$ is surjective.
	On the other hand, give $K_S'(\cdot, A):\Omega_S\to [0,1]$ we can just define $K_S(\cdot,A):\Omega\to [0,1]$
	by \eqref{eq:fact_lemma_applied}. For this reason $K_S(\omega,A)$ and
	$K_S(\omega_S,A)$ can be used equivalently when identified as explained here  and with a slight abuse of notation we will use both depending on the context without indicating the different domains.
	
\end{document}